\newcommand{\sjwsolved}[1]{}
\newcommand{\dist}{\mbox{\rm dist}}
\newcommand{\bx}{\bar{x}}
\newcommand{\bxT}{\bar{x}_T}
\newcommand{\g}{\nabla f(x)}
\newcommand{\gk}{\nabla f(x_k)}
\newcommand{\gT}{\nabla f(x_T)}
\newcommand{\nabs}[1]{{\left\|{#1}\right\|}}
\newcommand{\E}{\mathbb{E}}
\newcommand{\pmin}{p_{\min}}
\newcommand{\TheTitle}{First-Order Algorithms
Converge Faster than $O(1/k)$ on Convex Problems}
\title{{\TheTitle} \thanks{This work was supported by NSF Awards
    IIS-1447449, 1628384, 1634597, and 1740707; Subcontract 8F-30039
    from Argonne National Laboratory; and Award N660011824020 from the
    DARPA Lagrange Program.} }
\author{Ching-pei Lee \and Stephen~J. Wright}
\institute{Department of Computer Sciences and Wisconsin Institute for
	Discovery, University of
	Wisconsin-Madison, Madison, WI \\
	\email{\{ching-pei,swright\}@cs.wisc.edu}
}
\date{}
\begin{document}

\maketitle

\begin{abstract}
  It is well known that both gradient descent and
  stochastic coordinate descent achieve a global convergence rate of
  $O(1/k)$ in the objective value, when applied to a scheme for
  minimizing a Lipschitz-continuously differentiable, unconstrained
  convex function.  In this work, we improve this rate to $o(1/k)$.
  We extend the result to proximal gradient and proximal coordinate
  descent on regularized problems to show similar $o(1/k)$ convergence
  rates. The result is tight in the sense that a rate of
  $O(1/k^{1+\epsilon})$ is not generally attainable for any
  $\epsilon>0$, for any of these methods.

\keywords{ Gradient descent methods \and Coordinate descent methods \and Proximal
gradient methods \and Convex optimization \and
Complexity  }
\end{abstract}

\section{Introduction} \label{sec:intro}
Consider the unconstrained optimization problem
\begin{equation}
\min_x \, f(x),
\label{eq:f}
\end{equation}
where $f$ has domain in an inner-product space and is convex and $L$-Lipschitz
continuously differentiable for some $L > 0$. We assume throughout
that the solution set $\Omega$ is non-empty. (Elementary arguments
based on the convexity and continuity of $f$ show that $\Omega$ is a
closed convex set.) Classical convergence theory for gradient descent
on this problem indicates a $O(1/k)$ global convergence
rate in the function value. Specifically, if
\begin{equation}
x_{k+1} \coloneqq x_k - \alpha_k \gk, \quad k=0,1,2,\dotsc,
\label{eq:gd}
\end{equation}
and $\alpha_k \equiv \bar\alpha \in (0, 1/L]$, we have
\begin{equation}
f\left( x_k \right) - f^* \leq \frac{\dist(x_0, \Omega)^2}{2\bar\alpha
k}, \quad k=1,2,\dotsc,
\label{eq:1k}
\end{equation}
where $f^*$ is the optimal objective value and $\dist(x, \Omega)$
denotes the distance from $x$ to the solution set.  The proof of
\eqref{eq:1k} relies on showing that
\begin{equation}
	k \left(f\left( x_k \right) - f^*\right) \leq \sum_{T=1}^{k}
	\left(f\left( x_{T} \right) - f^* \right) \leq \frac{1}{2\bar
	\alpha} \dist(x_0, \Omega)^2, \quad k=1,2,\dotsc,
	\label{eq:sum}
\end{equation}
where the first inequality utilizes the fact that gradient descent is
a descent method (yielding a nonincreasing sequence of function values
$\{ f(x_k \}$).
We demonstrate in this paper that the bound \eqref{eq:1k} is not
tight, in the sense that $k (f(x_k)-f^*) \to 0$, and thus $f(x_k)-f^*
= o(1/k)$. This result is a consequence of the following technical
lemma.
\begin{lemma}
\label{lemma:techniques}
Let $\{\Delta_k\}$ be a nonnegative sequence satisfying the following
conditions:
\begin{enumerate}
\item $\{\Delta_k\}$ is monotonically decreasing;
\item $\{\Delta_k\}$ is summable, that is, $\sum_{k=0}^{\infty}
  \Delta_k < \infty$.
\end{enumerate}
Then $k \Delta_k \to 0$, so that $\Delta_k = o(1/k)$.
\end{lemma}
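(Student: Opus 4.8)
The plan is to exploit both hypotheses simultaneously through a tail-window argument: summability forces the sum over any window of indices that recedes to infinity to vanish, while monotonicity lets us bound such a window sum from below by its number of terms times the smallest (last) term in the window. Choosing the window to contain roughly half of the indices up to $k$ is what supplies the factor of $k$ we need.

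Concretely, I would first invoke the Cauchy criterion for the convergent series $\sum_{k} \Delta_k$: given any $\epsilon > 0$, there is an integer $N$ such that $\sum_{j=n}^{m} \Delta_j < \epsilon$ whenever $m \geq n > N$. Next, for each $k$ I would consider the window of indices running from $\lceil k/2 \rceil$ to $k$. Since $\{\Delta_k\}$ is monotonically decreasing, every term $\Delta_j$ in this range satisfies $\Delta_j \geq \Delta_k$, so that
\begin{equation*}
\sum_{j = \lceil k/2 \rceil}^{k} \Delta_j \;\geq\; \left( k - \lceil k/2 \rceil + 1 \right)\Delta_k \;\geq\; \frac{k}{2}\,\Delta_k.
\end{equation*}
For all $k$ large enough that $\lceil k/2 \rceil > N$, the left-hand side is below $\epsilon$ by the Cauchy criterion, and therefore $k\Delta_k < 2\epsilon$. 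As $\epsilon$ was arbitrary, this yields $k\Delta_k \to 0$, which is precisely the assertion $\Delta_k = o(1/k)$.

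The only point I would take care with — and the step that does the real work — is the elementary counting estimate that the window $\{\lceil k/2 \rceil, \dotsc, k\}$ contains at least $k/2$ indices, since $\lceil k/2 \rceil \leq (k+1)/2$ gives $k - \lceil k/2 \rceil + 1 \geq (k+1)/2$. This is where monotonicity is converted into the crucial factor that grows linearly in $k$; everything else is a direct application of summability, with nonnegativity used implicitly to guarantee that the partial sums increase so that the Cauchy criterion applies to the series as stated. I do not expect any serious obstacle: the argument is robust, and the same window trick would adapt without change to any rescaling such as showing $\Delta_k = o(1/k)$ directly from tail bounds of prescribed decay.
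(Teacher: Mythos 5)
Your proof is correct, and it takes a genuinely different route from the one in the paper. You use the classical window (Cauchy--tail) argument: monotonicity bounds the block sum $\sum_{j=\lceil k/2\rceil}^{k}\Delta_j$ from below by $\bigl(k-\lceil k/2\rceil+1\bigr)\Delta_k \ge \tfrac{k}{2}\Delta_k$, while summability forces that block sum below any $\epsilon$ once $\lceil k/2\rceil$ exceeds the Cauchy threshold, giving $k\Delta_k<2\epsilon$ directly; your counting step $k-\lceil k/2\rceil+1\ge (k+1)/2$ is right. The paper instead adapts a Lyapunov-style construction from Polyak: it sets $s_k = k\Delta_k$ and $u_k = s_k+\sum_{i=k}^{\infty}\Delta_i$, shows $\{u_k\}$ is nonincreasing (using $\Delta_{k+1}\le\Delta_k$ only between consecutive terms) and hence convergent to some $u\ge 0$, concludes $s_k\to u$ since the tail sums vanish, and rules out $u>0$ because $s_k\ge u/2$ would give $\Delta_k\ge u/(2k)$, contradicting summability. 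Your version is shorter, avoids the final contradiction, and yields an explicit quantitative bound $k\Delta_k\le 2\sum_{j\ge\lceil k/2\rceil}\Delta_j$ that makes the rate mechanism transparent; however, it leans on monotonicity holding across an entire block of $\Theta(k)$ indices. The paper's auxiliary-sequence formulation is heavier but is the template that extends to perturbed or approximate descent conditions of the kind arising in stochastic and inexact settings, which is presumably why the authors follow Polyak's scheme. Both arguments are complete and use exactly the stated hypotheses.
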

\begin{proof}
  The proof uses simplified elements of the proofs of Lemmas~2 and 9
  of Section~2.2.1 from \cite{Pol87a}.  Define $s_k \coloneqq k \Delta_k$ and
  $u_k \coloneqq s_k + \sum_{i=k}^{\infty} \Delta_i$.  Note that
  \begin{equation} \label{eq:vp1}
  s_{k+1} = (k+1) \Delta_{k+1} \le k \Delta_k + \Delta_{k+1} \leq s_k
  + \Delta_k.
  \end{equation}
  From \eqref{eq:vp1} we have
  \[
  u_{k+1} = s_{k+1} + \sum_{i=k+1}^{\infty} \Delta_i \le s_k +
  \Delta_k + \sum_{i=k+1}^{\infty} \Delta_i = s_k +
  \sum_{i=k}^{\infty} \Delta_i = u_k,
  \]
  so that $\{ u_k \}$ is a monotonically decreasing nonnegative
  sequence. Thus there is $u \ge 0$ such that $u_k \to u$, and since
  $\lim_{k \to \infty} \sum_{i=k}^{\infty} \Delta_i = 0$, we have $s_k
  \to u$ also.

  Assuming for contradiction that $u>0$, there exists $k_0>0$ such
  that $s_k \ge u/2>0$ for all $k \ge k_0$, so that $\Delta_k \ge
  {u}/{(2k)}$ for all $k \ge k_0$. This contradicts the summability of
  $\{ \Delta_k \}$.  Therefore we have $u=0$, so that $k \Delta_k =
  s_k \to 0$, proving the result.
  \qed
\end{proof}


Our claim about the fixed-step gradient descent method follows
immediately by setting $\Delta_k = f(x_k)-f^*$ in
Lemma~\ref{lemma:techniques}. We state the result formally as follows,
and prove it at the start of Section~\ref{sec:main}.
\begin{theorem}
\label{thm:main}
Consider \eqref{eq:f} with $f$ convex and $L$-Lipschitz continuously
differentiable and nonempty solution set $\Omega$.  If the step sizes
satisfy $\alpha_k \equiv \bar\alpha \in (0, 1 / L]$ for all $k$, then
  gradient descent \eqref{eq:gd} generates objective values $f(x_k)$
  that converge to $f^*$ at an asymptotic rate of $o(1/k)$.
\end{theorem}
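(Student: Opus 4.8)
The plan is to invoke Lemma~\ref{lemma:techniques} with the choice $\Delta_k \coloneqq f(x_k) - f^*$, exactly as anticipated in the surrounding text. To do this I must check that $\{\Delta_k\}$ is nonnegative and satisfies the two hypotheses of the lemma, namely monotone decrease and summability. Once both are verified, the conclusion $k\Delta_k \to 0$ of the lemma is precisely the statement that $f(x_k) - f^* = o(1/k)$.

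Nonnegativity is immediate, since $f^*$ is the minimal value of $f$, so $f(x_k) \ge f^*$ for every $k$. For monotone decrease I would appeal to the standard descent lemma: $L$-Lipschitz continuity of $\nabla f$ gives
\[
f(x_{k+1}) \le f(x_k) + \langle \gk,\, x_{k+1} - x_k\rangle + \frac{L}{2}\nabs{x_{k+1} - x_k}^2.
\]
Substituting the iteration \eqref{eq:gd}, so that $x_{k+1} - x_k = -\bar\alpha\gk$, produces
\[
f(x_{k+1}) \le f(x_k) - \bar\alpha\left(1 - \tfrac{L\bar\alpha}{2}\right)\nabs{\gk}^2 ,
\]
and since $\bar\alpha \in (0, 1/L]$ the coefficient satisfies $1 - L\bar\alpha/2 \ge 1/2 > 0$. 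Hence $f(x_{k+1}) \le f(x_k)$, so $\{\Delta_k\}$ is monotonically decreasing, which is the first hypothesis.

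For summability I would use the bound \eqref{eq:sum}, already recorded in the introduction, which states that the partial sums $\sum_{T=1}^{k}\left(f(x_T)-f^*\right)$ are bounded above by the constant $\dist(x_0,\Omega)^2/(2\bar\alpha)$, independently of $k$. A series of nonnegative terms whose partial sums are uniformly bounded must converge, so $\sum_{k=0}^{\infty}\Delta_k < \infty$, giving the second hypothesis. With both conditions verified, Lemma~\ref{lemma:techniques} delivers $k\left(f(x_k)-f^*\right)\to 0$, which is the claimed $o(1/k)$ rate.

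I do not expect a genuine obstacle here: the two substantive ingredients---the descent property and the summability bound \eqref{eq:sum}---are precisely the classical facts underlying the $O(1/k)$ theory, and the only new idea is the elementary summation argument already isolated in Lemma~\ref{lemma:techniques}. The one point warranting mild care is that \eqref{eq:sum} begins its sum at $T=1$ whereas the lemma sums from $k=0$; this is harmless, since the single finite term $\Delta_0 = f(x_0)-f^*$ does not affect convergence of the series.
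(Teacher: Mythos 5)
Your proposal is correct and follows essentially the same route as the paper: take $\Delta_k = f(x_k)-f^*$, get monotonicity from the descent lemma with $\bar\alpha\in(0,1/L]$, get summability from the uniformly bounded partial sums in \eqref{eq:sum}, and invoke Lemma~\ref{lemma:techniques}. The only difference is that the paper treats the derivation of \eqref{eq:sum} (via \eqref{eq:dist}--\eqref{eq:tosum}) as part of the proof rather than citing it as a known classical bound, but since that derivation is standard and you identify it correctly, this is a matter of exposition rather than a gap.
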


This result shows that the $o(1/k)$ rate for gradient descent with a
fixed short step size is universal on convex problems, without any
additional requirements such as the boundedness of $\Omega$ assumed in
\cite[Proposition~1.3.3]{Ber16a}.  In the remainder of the paper, we
show that this faster rate holds for several other smooth optimization
algorithms, including gradient descent with fixed steps in the larger
range $(0,2/L)$, gradient descent with various line-search strategies,
and stochastic coordinate descent with arbitrary sampling
strategies.  We then extend the result to algorithms for regularized
convex optimization problems, including proximal gradient and
stochastic proximal coordinate descent.

Except for the cases of coordinate descent and proximal coordinate
descent which require a finite-dimensional space so that all the
coordinates can be processed, our results apply to any inner-product
spaces.  Assumptions such as bounded solution set, bounded level set,
or bounded distance to the solution set, which are commonly assumed in
the literature, are all unnecessary.  We can remove these assumptions
because an implicit regularization property causes the iterates to
stay within a bounded area.

In our description, the Euclidean norm is used for simplicity, but our
results can be extended directly to any norms induced by an inner
product,\footnote{We meant that given an inner product
	$<\cdot,\cdot>$, the norm $\|\cdot\|$ is defined as $\|x\|
	\coloneqq \sqrt{<x,x>}$.}
provided that Lipschitz continuity of $\nabla f$ is defined
with respect to the corresponding norm and its dual norm.

\paragraph{Related Work.}
Our work was inspired by \cite[Corollary~2]{PenZZ18a} and
\cite[Proposition~1.3.3]{Ber16a}, which improve convergence for
certain algorithms and problems on convex problems in a Euclidean
space from $O(1/k)$ to $o(1/k)$ when the level set is compact.  Our
paper develops improved convergence rates of several algorithms on
convex problems without the assumption on the level set, with most of
our results applying to non-Euclidean Hilbert spaces.  The main proof
techniques in this work are somewhat different from those in the works
cited here.

For an accelerated version of proximal gradient on convex problems,
it is proved in \cite{AttP16a} that the convergence rate can be
improved from $O(1/k^2)$ to $o(1/k^2)$.  Accelerated proximal
gradient is a more complicated algorithm than the nonaccelerated
versions we discuss, and thus \cite{AttP16a} require a more
complicated analysis that is quite different from ours.

\cite{DenLPY17a} have stated a version of
Lemma~\ref{lemma:techniques} with a proof different from the proof
that we present, using it to show the
convergence rate of the quantity $\|x_k - x_{k+1}\|$ of a version of
the alternating-directions method of multipliers (ADMM).  Our work
differs in the range of algorithms considered and the nature of the
convergence. We also provide a discussion of the tightness of the
$o(1/k)$ convergence rate.

\section{Main Results on Unconstrained Smooth Problems}
\label{sec:main}

We start by detailing the procedure for obtaining \eqref{eq:sum}, to
complete the proof of Theorem~\ref{thm:main}.  First, we define
\begin{equation}
M(\alpha) \coloneqq \alpha - \tfrac12 L\alpha^2.
\label{eq:M}
\end{equation}
From the Lipschitz continuity of $\nabla f$, we have for any
point $x$ and any real number $\alpha$ that
\begin{align}
f\left( x - \alpha \g \right)
\leq f( x ) - \g^\top \left( \alpha \g  \right)
	+ \frac{L}{2}\nabs{\alpha \g}^2
\label{eq:Lip}
= f( x) - M( \alpha ) \nabs{\g}^2.
\end{align}
Clearly,
\begin{equation}
\alpha \in \left(0,\frac1L\right] \quad \Rightarrow \quad
M(\alpha) \geq \tfrac12 {\alpha}> 0,
\label{eq:Mbound}
\end{equation}
so in this case, we have by rearranging \eqref{eq:Lip} that
\begin{align}
  \| \nabla f(x) \|^2  \le \frac{1}{M(\alpha)} \left( f(x) -
  f(x-\alpha \g) \right) \le \frac{2}{\alpha} \left( f(x) - f(x-\alpha
  \g) \right).
   \label{eq:th2}
  \end{align}

Considering any solution $\bx \in \Omega$ and any $T \geq 0$, we have
for gradient descent \eqref{eq:gd} that
\begin{align}
\label{eq:dist}
\nabs{x_{T+1} - \bx}^2 = \nabs{x_T - \alpha_T \gT - \bx}^2
= \nabs{x_T - \bx}^2 + \alpha_T^2 \|\gT\|^2 - 2 \alpha_T \gT^\top
\left( x_T - \bx \right).
\end{align}
Since $\alpha_T \in (0, 1/L]$ in \eqref{eq:dist}, from
  \eqref{eq:th2} and the convexity of $f$ (implying $\nabla
  f(x_T)^T(\bx-x_T) \le f^* - f(x_T)$), we have
\begin{equation}
\nabs{x_{T+1} - \bx}^2 \leq \nabs{x_T - \bx}^2 + 2\alpha_T \left(
	f\left( x_{T} \right) - f\left( x_{T+1} \right)\right) + 2 \alpha_T
	\left(f^*- f\left( x_T \right) \right).
\label{eq:rk}
\end{equation}
By rearranging \eqref{eq:rk} and using $\alpha_T  \equiv \bar\alpha \in (0,1/L]$,
\begin{align}
f\left( x_{T+1} \right) - f^*
\leq \frac{1}{2\bar\alpha} \left(
\nabs{x_T - \bx}^2 - \nabs{x_{T+1} - \bx}^2\right).
\label{eq:tosum}
\end{align}
We then obtain \eqref{eq:sum} by summing \eqref{eq:tosum} from $T=0$
to $T=k-1$ and noticing that $\bx$ is arbitrary in $\Omega$.

Theorem~\ref{thm:main} applies to step sizes in the range $(0,1/L]$
  only, but it is known that gradient descent converges at the rate of
  $O(1/k)$ for both the fixed step size scheme with $\bar\alpha \in
  (0,2/L)$ and line-search schemes.  Next, we show that $o(1/k)$ rates
  hold for these variants too.  We then extend the result to
  stochastic coordinate descent with arbitrary sampling of
  coordinates.

\subsection{Gradient Descent with Longer Steps}
\label{subsec:linesearch}
In this subsection, we allow the steplengths $\alpha_k$ for
\eqref{eq:gd} to vary from iteration to iteration, according to the
following conditions, for some $\gamma \in (0,1]$:
\begin{subequations}
\begin{gather}
\label{eq:bdd}
\alpha_k \in [C_2, C_1],\quad C_2 \in \left(0, \frac{ 2 - \gamma
}{L}\right],\quad C_1 \geq C_2,\\
		\label{eq:sufficient}
		f\left( x_k - \alpha_k \gk \right) \leq f\left( x_k \right) -
		\frac{\gamma \alpha_k}{2} \nabs{\gk}^2,
\end{gather}
\label{eq:linesearch}
\end{subequations}
Note that these conditions encompass a fixed-steplength strategy with
$\alpha_k \equiv C_2$ as a special case, by setting $C_1 = C_2$, and
noting that condition \eqref{eq:sufficient} is a consequence of
\eqref{eq:Lip}. (Note too that $\alpha_k \equiv C_2 \in (0,
( 2 - \gamma) / L]$ can be almost
twice as large as the bound $1/L$ considered above.)

The main result for this subsection is as follows.
\begin{theorem}
\label{thm:main1}
Consider \eqref{eq:f} with $f$ convex and $L$-Lipschitz continuously
differentiable and nonempty solution set $\Omega$.  If the step sizes
$\alpha_k$ satisfy \eqref{eq:linesearch}, then gradient descent
\eqref{eq:gd} generates objective values $f(x_k)$ converging to $f^*$
at an asymptotic rate of $o(1/k)$.
\end{theorem}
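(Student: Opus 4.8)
The plan is to reduce Theorem~\ref{thm:main1} to Lemma~\ref{lemma:techniques}, exactly as Theorem~\ref{thm:main} was, by showing that $\Delta_k \coloneqq f(x_k) - f^*$ is nonnegative, monotonically decreasing, and summable. Nonnegativity is immediate from the definition of $f^*$. Monotonicity follows directly from the sufficient-decrease condition \eqref{eq:sufficient}: since $\gamma \alpha_k / 2 \ge 0$, we have $f(x_{k+1}) \le f(x_k)$, so $\{\Delta_k\}$ is nonincreasing. The entire content of the theorem therefore lies in establishing summability, i.e. $\sum_{k=0}^\infty \big( f(x_k) - f^* \big) < \infty$.

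For summability I would adapt the telescoping argument that produced \eqref{eq:sum}, but now carefully tracking the two differences from the earlier setting: the step sizes vary within $[C_2, C_1]$, and the allowed range extends up to $(2-\gamma)/L$, which pushes $M(\alpha_k)$ out of the clean regime $M(\alpha)\ge \alpha/2$ used in \eqref{eq:Mbound}. The key is to revisit the distance recursion \eqref{eq:dist}. First I would use \eqref{eq:sufficient} to control the gradient term: rearranging gives $\nabs{\gk}^2 \le (2/(\gamma\alpha_k))\big(f(x_k)-f(x_{k+1})\big)$, which replaces the role played by \eqref{eq:th2}. Substituting this bound for $\alpha_T^2\nabs{\gT}^2$ into \eqref{eq:dist}, and using convexity $\gT^\top(\bx-x_T)\le f^*-f(x_T)$ for the cross term as before, I expect to arrive at an inequality of the form
\begin{equation}
\nabs{x_{T+1}-\bx}^2 \le \nabs{x_T-\bx}^2 + \frac{2\alpha_T}{\gamma}\big(f(x_T)-f(x_{T+1})\big) + 2\alpha_T\big(f^*-f(x_T)\big).
\label{eq:rkvar}
\end{equation}
The plan is then to rearrange \eqref{eq:rkvar} to isolate $f(x_T)-f^*$ and telescope the distance terms over $T=0,\dots,k-1$.

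The main obstacle is the handling of the term $\tfrac{2\alpha_T}{\gamma}\big(f(x_T)-f(x_{T+1})\big)$ after rearranging. Because $\alpha_T$ now varies with $T$, this does not telescope cleanly the way the fixed-step version did. I would address this by using the monotonicity of $\{f(x_T)\}$ (already established) together with the uniform bounds $C_2 \le \alpha_T \le C_1$ to pass to a crude but summable bound: the consecutive-difference terms $f(x_T)-f(x_{T+1})$ are themselves nonnegative and telescope to the finite quantity $f(x_0)-f^*$, so $\sum_T \alpha_T\big(f(x_T)-f(x_{T+1})\big) \le C_1\big(f(x_0)-f^*\big)<\infty$, while the distance terms telescope and are bounded by $\nabs{x_0-\bx}^2$. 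Dividing the isolated $f(x_T)-f^*$ terms by the uniform lower factor $2C_2$ (and replacing $\alpha_T$ in the multiplier by $C_2$ where it helps, using the sign of each term) then yields a finite bound on $\sum_{T}\big(f(x_T)-f^*\big)$. The delicate point requiring care is verifying that the coefficients stay correctly signed so the uniform bounds $C_2$ and $C_1$ can be substituted in the favorable direction; once this bookkeeping is done, summability follows and Lemma~\ref{lemma:techniques} delivers the $o(1/k)$ rate.
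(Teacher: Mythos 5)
Your proposal is correct and follows essentially the same route as the paper's first proof of Theorem~\ref{thm:main1}: the same per-iteration inequality \eqref{eq:bound} derived from \eqref{eq:dist}, \eqref{eq:sufficient}, and convexity, the same sign-based substitution of $C_1$ for the nonnegative decrease terms and $C_2$ for the nonpositive suboptimality terms, and the same telescoping followed by an appeal to Lemma~\ref{lemma:techniques}. The ``delicate bookkeeping'' you flag is exactly what the paper carries out in Lemma~\ref{lemma:rk}, and it goes through as you anticipate.
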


We give two alternative proofs of this result to provide different
insights.  The first proof is similar to the one we presented for
Theorem \ref{thm:main} at the start of this section.  The second proof
holds only for Euclidean spaces. This proof improves the standard
proof of \cite[Section~2.1.5]{Nes04a}.

We start from the following lemma, which verifies that the iterates
remain in a bounded set and is used in both proofs.

\begin{lemma}
\label{lemma:rk}
Consider algorithm \eqref{eq:gd} with any initial point $x_0$, and
assume that $f$ is convex and $L$-Lipschitz-continuously
differentiable for some $L > 0$.  Then when the sequence of
steplengths $\alpha_k$ is chosen to satisfy \eqref{eq:linesearch}, all
iterates $x_k$ lie in a bounded set.  In particular, for any $\bx \in
\Omega$ and any $k \geq 0$, we have that
\begin{align}
\label{eq:toprove1}
\|x_{k+1} - \bx\|^2 &\le \|x_0 - \bx\|^2 +
\frac{2 C_1}{\gamma}\left( f\left( x_0 \right) - f\left( x_{k+1}
\right) \right)
+ 2 C_2 \sum_{T=0}^k\left( f^* - f\left( x_T \right)
\right)\\
&\le \|x_0 - \bx\|^2 + \frac{2 C_1}{\gamma}\left( f\left( x_0 \right)
- f^* \right).
\label{eq:toprove2}
\end{align}
\end{lemma}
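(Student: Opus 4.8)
The plan is to prove Lemma~\ref{lemma:rk} by starting from the one-step distance recursion, exactly as in \eqref{eq:dist}, but now keeping track of the weaker sufficient-decrease condition \eqref{eq:sufficient} rather than the exact bound \eqref{eq:th2} available for $\alpha_k \in (0,1/L]$. First I would expand $\|x_{T+1}-\bx\|^2 = \|x_T - \alpha_T\nabla f(x_T) - \bx\|^2$ into three terms as in \eqref{eq:dist}. The cross term $-2\alpha_T \nabla f(x_T)^\top(x_T - \bx)$ is handled by convexity, giving $\nabla f(x_T)^\top(\bx - x_T) \le f^* - f(x_T)$, which contributes $2\alpha_T(f^* - f(x_T))$. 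The key maneuver is to bound the remaining quantity $\alpha_T^2\|\nabla f(x_T)\|^2$ using the line-search condition \eqref{eq:sufficient}, which rearranges to $\frac{\gamma\alpha_T}{2}\|\nabla f(x_T)\|^2 \le f(x_T) - f(x_{T+1})$, hence
\begin{equation}
\alpha_T^2 \|\nabla f(x_T)\|^2 \le \frac{2\alpha_T}{\gamma}\left( f(x_T) - f(x_{T+1})\right) \le \frac{2C_1}{\gamma}\left( f(x_T) - f(x_{T+1})\right),
\label{eq:plan-grad}
\end{equation}
where the last step uses $\alpha_T \le C_1$ together with the fact that the right-hand side is nonnegative (the method is descending under \eqref{eq:sufficient}).

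Combining these, the one-step recursion becomes
\begin{equation}
\|x_{T+1}-\bx\|^2 \le \|x_T - \bx\|^2 + \frac{2C_1}{\gamma}\left( f(x_T) - f(x_{T+1})\right) + 2\alpha_T\left( f^* - f(x_T)\right).
\label{eq:plan-onestep}
\end{equation}
For the last term I would use $\alpha_T \ge C_2$ and $f^* - f(x_T) \le 0$ (by optimality of $f^*$) to write $2\alpha_T(f^* - f(x_T)) \le 2C_2(f^* - f(x_T))$. The main step is then to telescope \eqref{eq:plan-onestep} from $T=0$ to $T=k$: the distance terms collapse to $\|x_{k+1}-\bx\|^2 - \|x_0 - \bx\|^2$, the gradient-decrease terms telescope to $\frac{2C_1}{\gamma}(f(x_0) - f(x_{k+1}))$, and the optimality-gap terms sum to $2C_2 \sum_{T=0}^k (f^* - f(x_T))$, yielding exactly \eqref{eq:toprove1}. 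The second inequality \eqref{eq:toprove2} follows by simply dropping the last (nonpositive) sum, since each $f^* - f(x_T) \le 0$, and bounding $f(x_{k+1}) \ge f^*$ in the middle term.

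The step I expect to require the most care is \eqref{eq:plan-grad}, specifically the direction of the inequality when replacing $\alpha_T$ by $C_1$: this is valid precisely because \eqref{eq:sufficient} guarantees $f(x_T) - f(x_{T+1}) \ge 0$, so multiplying by the larger constant $C_1 \ge \alpha_T$ preserves the bound. I would verify that the conditions \eqref{eq:bdd}--\eqref{eq:sufficient} indeed force descent at every step, which underlies both the telescoping and the sign argument. The boundedness conclusion is then immediate from \eqref{eq:toprove2}, since the right-hand side is a constant independent of $k$, so all iterates lie in the ball of radius $\sqrt{\|x_0-\bx\|^2 + \frac{2C_1}{\gamma}(f(x_0)-f^*)}$ around any fixed $\bx \in \Omega$. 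This bounded-set property is exactly what is needed to apply Lemma~\ref{lemma:techniques} in the subsequent proof of Theorem~\ref{thm:main1}, since summability of $\{f(x_k)-f^*\}$ will follow from the accumulated decrease being finite.
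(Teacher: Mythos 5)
Your proposal is correct and follows essentially the same route as the paper's proof: the same expansion of $\|x_{T+1}-\bx\|^2$, the same use of \eqref{eq:sufficient} to bound $\alpha_T^2\|\nabla f(x_T)\|^2$ and of convexity for the cross term, the same sign arguments justifying the replacement of $\alpha_T$ by $C_1$ and $C_2$ respectively, and the same telescoping sum. No gaps.
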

\begin{proof}
	By \eqref{eq:sufficient} and the convexity of $f$, \eqref{eq:dist}
	further implies that for any $T \ge 0$,
\begin{align}
\label{eq:bound}
\nabs{x_{T+1} - \bx}^2 - \nabs{x_{T} - \bx}^2
\le  \frac{2\alpha_T}{\gamma}\left( f\left( x_T \right) - f\left(
x_{T+1}
\right)\right) + 2 \alpha_T \left( f^* - f\left( x_T \right) \right).
\end{align}
We know that the first term is nonnegative from \eqref{eq:sufficient},
while the second term is nonpositive from the optimality of $f^*$.
Therefore, \eqref{eq:bound} implies
\begin{align}
\label{eq:bound2}
\nabs{x_{T+1} - \bx}^2 - \nabs{x_{T} - \bx}^2
\le  \frac{2 C_1}{\gamma}\left( f\left( x_T \right) - f\left(
x_{T+1}
\right)\right) + 2 C_2 \left( f^* - f\left( x_T \right) \right).
\end{align}
We then obtain \eqref{eq:toprove1}
by summing \eqref{eq:bound2} for $T=0,1,\dotsc,k$ and telescoping.
By noting that $f(x_k) \geq f^*$ for all $k$,
\eqref{eq:toprove2} follows.
\qed
\end{proof}

The first proof of Theorem \ref{thm:main1} is as follows.

\begin{proof}[First Proof of Theorem \ref{thm:main1}]
We again consider Lemma~\ref{lemma:techniques} with $\Delta_k
\coloneqq f(x_k) - f^*$, which is always nonnegative from the
optimality of $f^*$.  Monotonicity is clear from
\eqref{eq:sufficient}, so we just need to show summability.  By
rearranging \eqref{eq:toprove1} and noting $f(x_{k+1}) \geq f^*$, we
obtain
\begin{align*}
2C_2\sum_{T=0}^k \Delta_T \le \nabs{x_0 - \bx}^2 - \nabs{x_{k+1} - \bx}^2 +
	\frac{2 C_1}{\gamma}\Delta_0
\le \nabs{x_0 - \bx}^2+ \frac{2 C_1}{\gamma}\Delta_0.
\tag*{\qed}
\end{align*}
\end{proof}

For the second proof of Theorem~\ref{thm:main1}, we first outline the
analysis from \cite[Section~2.1.5]{Nes04a} and then show how it can
be modified to produce the desired $o(1/k)$ rate. Denote by
$\bxT$ the projection of $x_T$ onto $\Omega$ (which is well defined
because $\Omega$ is nonempty, closed, and convex). We can utilize the
convexity of $f$ to obtain
\begin{equation*}
\Delta_T \leq \gT^\top \left( x_T - \bxT \right) \leq \|\gT\| \dist\left(
	x_T,\Omega \right),
\end{equation*}
so that
\begin{equation} \label{eq:r0bound}
	\| \nabla f(x_T) \| \ge \frac{\Delta_T}{\dist ( x_T,\Omega)}.
\end{equation}
By subtracting $f^*$ from both sides of
  \eqref{eq:sufficient} and using $\alpha_k \ge C_2$ and
  \eqref{eq:r0bound}, we obtain 
  \[
\Delta_{T+1} \leq \Delta_T -
\frac{C_2 \gamma\Delta_T^2}{2 \dist\left( x_T,\Omega \right)^2}.
\]
By dividing both sides of this expression by $\Delta_T \Delta_{T+1}$
and using $\Delta_{T+1} \le \Delta_T$, we obtain
\begin{equation}
\frac{1}{\Delta_{T+1}} \geq \frac{1}{\Delta_T} + \frac{C_2 \gamma
\Delta_T }{2 \dist\left( x_T, \Omega \right)^2 \Delta_{T+1}}
\ge
\frac{1}{\Delta_T} + \frac{C_2 \gamma}{2 \dist\left( x_T, \Omega \right)^2}.
\label{eq:rate}
\end{equation}
By summing \eqref{eq:rate} over $T=0,1,\dotsc,k-1$,  we obtain
\begin{equation}
\frac{1}{\Delta_{k}} \geq \frac{1}{\Delta_0} +
\sum_{T=0}^{k-1}
\frac{C_2 \gamma}{2 \dist\left( x_T,\Omega \right)^2}
\quad
	\Rightarrow
	\quad
	\Delta_{k} \leq \frac{1}{\sum_{T=0}^{k-1}
	\frac{C_2 \gamma}{2 \dist\left( x_T,\Omega \right)^2}}.
\label{eq:fromhere}
\end{equation}
A $O(1/k)$ rate is obtained by noting from Lemma~\ref{lemma:rk} that
$\dist(x_T, \Omega) \le R_0$ for some $R_0>0$ and all $T$, so that
\begin{equation}
	\sum_{T=0}^{k-1} \frac{1}{\dist\left( x_T,\Omega \right)^2} \geq
	\frac{k}{R_0^2}.
\label{eq:toimprove}
\end{equation}

Our alternative proof uses the fact that \eqref{eq:toimprove} is a
loose bound for Euclidean spaces and that an improved result can be
obtained by working directly with \eqref{eq:fromhere}.  We first use
the Bolzano-Weierstrass theorem (a bounded and closed set is
sequentially compact in a Euclidean space) together with
Lemma~\ref{lemma:rk}, to show that the sequence $\{x_k \}$ approaches
the solution set $\Omega$.
\begin{lemma}
\label{lemma:conv}
Assume the conditions in Lemma~\ref{lemma:rk} and in addition that $f$
has domain in a Euclidean space $f: \Re^n \rightarrow \Re$.  We have
\begin{equation}
\lim_{k \rightarrow \infty}\, \dist\left( x_k,\Omega \right) = 0.
\label{eq:conv}
\end{equation}
\end{lemma}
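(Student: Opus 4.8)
The plan is to argue by contradiction, exploiting the boundedness of the iterates guaranteed by Lemma~\ref{lemma:rk} together with sequential compactness in $\Re^n$. Suppose \eqref{eq:conv} fails. Then there exist $\varepsilon > 0$ and a subsequence $\{x_{k_j}\}$ with $\dist(x_{k_j}, \Omega) \ge \varepsilon$ for all $j$. Since Lemma~\ref{lemma:rk} confines the entire sequence to a bounded set, the Bolzano--Weierstrass theorem lets me extract a further subsequence $\{x_{k_{j_i}}\}$ converging to some limit point $x^*$. Because the distance to the nonempty closed convex set $\Omega$ is continuous (indeed $1$-Lipschitz), passing to the limit gives $\dist(x^*, \Omega) \ge \varepsilon > 0$, so $x^* \notin \Omega$.

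The crux is then to show that $x^*$ must in fact be a minimizer, which will contradict $x^* \notin \Omega$. First I would establish that $\nabla f(x_k) \to 0$. Rearranging the sufficient-decrease condition \eqref{eq:sufficient} and using $\alpha_k \ge C_2$ from \eqref{eq:bdd} gives
\begin{equation*}
\frac{\gamma C_2}{2}\nabs{\gk}^2 \le \frac{\gamma \alpha_k}{2}\nabs{\gk}^2 \le f(x_k) - f(x_{k+1}).
\end{equation*}
Summing over $k$ telescopes the right-hand side to at most $f(x_0) - f^* < \infty$, so $\sum_k \nabs{\gk}^2 < \infty$ and hence $\nabs{\gk} \to 0$. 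By continuity of $\nabla f$ we then have $\nabla f(x^*) = \lim_i \nabla f(x_{k_{j_i}})$, and this limit is $0$ since $\nabs{\gk} \to 0$ along the whole sequence. Because $f$ is convex, $\nabla f(x^*) = 0$ implies that $x^*$ is a global minimizer, i.e.\ $x^* \in \Omega$. This contradicts $x^* \notin \Omega$, and the contradiction establishes \eqref{eq:conv}.

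An equivalent route would replace the gradient argument with a function-value argument: the monotone, summable sequence $\Delta_k = f(x_k) - f^*$ (summability being exactly what was shown in the first proof of Theorem~\ref{thm:main1}) converges to $0$, so $f(x^*) = \lim_i f(x_{k_{j_i}}) = f^*$ by continuity of $f$, again forcing $x^* \in \Omega$. I expect the main subtlety, rather than any heavy computation, to be the careful handling of the nested subsequences and the justification that limit points of $\{x_k\}$ inherit optimality. Both of these rely essentially on the finite dimensionality, through Bolzano--Weierstrass, and on the continuity of $\nabla f$ (or of $f$ and $\dist(\cdot,\Omega)$); this is precisely where the Euclidean hypothesis of the lemma enters, since sequential compactness of bounded closed sets can fail in infinite-dimensional spaces.
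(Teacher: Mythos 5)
Your proposal is correct, and its skeleton (contradiction, boundedness from Lemma~\ref{lemma:rk}, Bolzano--Weierstrass, continuity of $\dist(\cdot,\Omega)$, then showing the accumulation point is optimal) matches the paper's. The difference lies in how optimality of the limit point $x^*$ is established. The paper feeds the contradiction hypothesis $\dist(x_{k_i},\Omega)\ge\epsilon$ back into the recursion \eqref{eq:rate} to get $1/\Delta_{k_{i+1}} \ge 1/\Delta_{k_i} + C_2\gamma/(2\epsilon^2)$, hence $\Delta_k \downarrow 0$, and concludes $f(x^*)=f^*$ by continuity of $f$. Your primary route instead sums the sufficient-decrease condition \eqref{eq:sufficient} to get $\sum_k \nabs{\gk}^2 < \infty$, deduces $\nabla f(x^*)=0$ by continuity of $\nabla f$, and invokes convexity to conclude $x^*\in\Omega$; this bypasses \eqref{eq:rate} and \eqref{eq:r0bound} entirely and is self-contained, at the mild cost of routing the optimality claim through the gradient rather than the function value. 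Your ``alternative route'' via $\Delta_k = f(x_k)-f^*$ being monotone and summable (from the first proof of Theorem~\ref{thm:main1}) is essentially the paper's argument, and is arguably cleaner than either, since $\Delta_k\to 0$ then needs no appeal to the contradiction hypothesis at all. All three variants are valid; your handling of the nested subsequences and of where the Euclidean hypothesis enters is accurate.
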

\begin{proof}
 The proof is similar to \cite[Proposition~1]{PenZZ18a}.  Assume for
 contradiction that \eqref{eq:conv} does not hold.  Then there are
 $\epsilon > 0$ and an infinite increasing sequence $\{k_i\}$,
 $i=1,2,\dotsc$, such that
\begin{equation}
	\dist\left( x_{k_i}, \Omega \right) \geq \epsilon, \quad i=1,2,\dotsc.
	\label{eq:contradict}
\end{equation}
From Lemma~\ref{lemma:rk} and that $\{x_{k_i}\} \subset \Re^n$, we can
the sequence $\{x_{k_i}\}$ lies in a compact set and therefore has an
accumulation point $x^*$. From \eqref{eq:rate}, we have
$$\frac{1}{\Delta_{k_{i+1}}} \ge \frac{1}{\Delta_{k_i}} +
\frac{C_2\gamma}{2\epsilon^2},$$ so
that $1/\Delta_k \uparrow \infty$ and hence $\Delta_k \downarrow
0$. By continuity of $f$, it follows that $f(x^*) = f^*$, so that $x^*
\in \Omega$ by definition, contradicting \eqref{eq:contradict}.
\qed
\end{proof}

We note that a result similar to Lemma \ref{lemma:conv} has been given
in \cite{BurGIS95a} using a more complicated argument with more
restricted choices of $\alpha$.


\begin{proof}[Second Proof of Theorem~\ref{thm:main1}, for
	Euclidean Spaces]
We start with \eqref{eq:fromhere} and show that
\begin{equation*}
\lim_{k\rightarrow \infty} \frac{\frac{1}{\frac{C_2 \gamma}{2}
\sum_{T=0}^{k-1}
\frac{1}{\dist(x_T,\Omega)^2}}}{\frac{1}{k}} = 0,
\end{equation*}
or, equivalently,
\begin{equation}
\lim_{k \rightarrow \infty} \frac{k}{ \sum_{T = 0}^{k-1}
\frac{1}{\dist(x_T,\Omega)^2}} = 0.
\label{eq:inf}
\end{equation}
From the arithmetic-mean / harmonic-mean inequality,\footnote{ This
  inequality says that for any real numbers $a_1,\dotsc, a_n > 0$,
  their harmonic mean does not exceed their arithmetic mean.  Namely,
\begin{equation*}
\frac{n}{\sum_{i=1}^na_i^{-1}} \leq \frac{\sum_{i=1}^n a_i}{n}.
\end{equation*}
}
we have that
\begin{equation}
0 \le \frac{k}{ \sum_{T = 0}^{k-1} \frac{1}{\dist(x_T,\Omega)^2}}
\leq \frac{\sum_{T=0}^{k-1} \dist(x_T,\Omega)^2}{k}.
\label{eq:amhm}
\end{equation}
Lemma \ref{lemma:conv} shows that $\dist(x_T, \Omega) \to 0$, so by
the Stolz-Ces\`aro theorem (see, for example, \cite{Mur09a}), the
right-hand side of \eqref{eq:amhm} converges to $0$.  Therefore, from
the sandwich lemma, \eqref{eq:inf} holds.  \qed
\end{proof}

\subsection{Coordinate Descent}
\label{subsec:cd}
We now extend Theorem~\ref{thm:main} to the case of randomized
coordinate descent.  Our results can extend immediately to
block-coordinate descent with fixed blocks.  Our analysis for
coordinate descent requires Euclidean spaces so that coordinate
descent can go through all coordinates.

The standard short-step coordinate descent procedure requires
knowledge of coordinate-wise Lipschitz constants.  Denoting by $e_i$
the $i$th unit vector, we denote by $L_i \geq 0$ the constants such
that:
\begin{equation}
	\left| \nabla_i f(x) - \nabla_i f(x + h e_i)\right | \leq L_i
	\left|h\right|, \quad 
	\mbox{for all $x \in \Re^n$ and all $h \in \Re$},
	\label{eq:Ls}
\end{equation}
where $\nabla_i f(\cdot)$ denotes the $i$th coordinate of the gradient.
Note that if $\nabla f(x)$ is $L$-Lipschitz continuous, there always
exist $L_1,\dotsc,L_n \in [0,L]$ such that \eqref{eq:Ls} holds.
Without loss of generality, we assume $L_i > 0$ for all
$i$.
Given parameters $\{\bar L_i\}_{i=1}^n$ such that $\bar L_i
\geq L_i$ for all $i$, the coordinate descent update is
\begin{equation}
	x_{k+1} \leftarrow x_k - \frac{\nabla_{i_k} f\left( x_k
\right)}{\bar L_{i_k}} e_{i_k},
	\label{eq:cd}
\end{equation}
where $i_k$ is the coordinate selected for updating at the $k$th iteration.
We consider the general case of stochastic coordinate
descent in which each $i_k$ is independently identically distributed
following a fixed prespecified probability distribution
$p_1,\dotsc,p_n$ satisfying
\begin{equation}
	p_i  \ge \pmin ,\quad  i=1,2,\dotsc,n; \quad \sum_{i=1}^n p_i = 1,
	\label{eq:prob}
\end{equation}
for some constant $\pmin>0$.  Nesterov \cite{Nes12a} proves that
stochastic coordinate descent has a $O(1/k)$ convergence rate (in
expectation of $f$) on convex problems.  We show below that this rate
can be improved to $o(1/k)$.
\begin{theorem}
\label{thm:cd}
Consider \eqref{eq:f} with $f$ convex and nonempty solution set
$\Omega$, and that componentwise-Lipschitz continuous differentiability
\eqref{eq:Ls} holds with some
$L_1,\dotsc,L_n > 0$.  If we apply coordinate descent \eqref{eq:cd}
and at each iteration, $i_k$ is independently picked at random
following a probability distribution satisfying \eqref{eq:prob}, then
the expected objective $\E_{i_0,i_1,\dotsc,i_{k-1}}[f(x_k)]$ converges to
$f^*$ at an asymptotic rate of $o(1/k)$.
\end{theorem}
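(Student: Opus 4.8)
The plan is to apply Lemma~\ref{lemma:techniques} to the sequence $\Delta_k \coloneqq \E[f(x_k)] - f^*$, exactly as we did for the deterministic theorems, but now in expectation. This requires verifying two things: that $\{\Delta_k\}$ is monotonically decreasing, and that it is summable. The monotonicity is the easier part. Applying the componentwise descent inequality implied by \eqref{eq:Ls}, namely $f(x_k - (\nabla_{i_k} f(x_k)/\bar L_{i_k}) e_{i_k}) \le f(x_k) - (1/(2\bar L_{i_k})) |\nabla_{i_k} f(x_k)|^2$, shows that each realized step is a descent step, so conditioning on the history $i_0,\dotsc,i_{k-1}$ and taking expectation over $i_k$ gives $\E[f(x_{k+1})] \le \E[f(x_k)]$, hence $\Delta_{k+1} \le \Delta_k$.

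The substance of the argument is summability. First I would compute the expected one-step decrease by averaging over the choice of $i_k$: conditioning on $x_k$,
\begin{equation*}
\E_{i_k}[f(x_{k+1}) \mid x_k] \le f(x_k) - \sum_{i=1}^n \frac{p_i}{2 \bar L_i} |\nabla_i f(x_k)|^2 \le f(x_k) - \frac{\pmin}{2 \bar L} \|\nabla f(x_k)\|^2,
\end{equation*}
where $\bar L \coloneqq \max_i \bar L_i$, using $p_i \ge \pmin$ from \eqref{eq:prob}. This plays the role of \eqref{eq:th2} in the deterministic proof, providing a handle on $\|\nabla f(x_k)\|^2$ in terms of the expected function decrease. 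Next I would track the squared distance to a fixed solution $\bx \in \Omega$, mimicking \eqref{eq:dist}. Because only coordinate $i_k$ changes, expanding $\|x_{k+1} - \bx\|^2$ and taking the conditional expectation over $i_k$ yields
\begin{equation*}
\E_{i_k}[\|x_{k+1} - \bx\|^2 \mid x_k] = \|x_k - \bx\|^2 + \sum_{i=1}^n \frac{p_i}{\bar L_i^2} |\nabla_i f(x_k)|^2 - 2 \sum_{i=1}^n \frac{p_i}{\bar L_i} \nabla_i f(x_k)(x_k - \bx)_i.
\end{equation*}
The cross term is the delicate one: the standard Nesterov analysis handles it by choosing the sampling probabilities proportional to $\bar L_i$ so that the cross term collapses to a clean multiple of $\nabla f(x_k)^\top(x_k - \bx)$, which convexity bounds by $f^* - f(x_k)$. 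Under our arbitrary sampling \eqref{eq:prob}, the probabilities need not align with the $\bar L_i$, so I expect this mismatch to be the main obstacle.

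To overcome it, I would introduce a reweighted norm adapted to the sampling, defining $\|z\|_P^2 \coloneqq \sum_i (p_i \bar L_i) z_i^2$ or a similar weighting, so that the cross term and the convexity bound are expressed in a consistent inner product; this is the device Nesterov uses to obtain $O(1/k)$ under general sampling, and it should carry through unchanged here. With the cross term controlled, combining the distance recursion with the gradient bound and convexity produces a per-iteration inequality of the form
\begin{equation*}
c\,\E_{i_k}[\Delta_{k+1} \mid x_k] \le \E_{i_k}[\|x_k - \bx\|_P^2 - \|x_{k+1} - \bx\|_P^2 \mid x_k] + (\text{telescoping function-decrease terms}),
\end{equation*}
for some constant $c > 0$ depending on $\pmin$ and $\{\bar L_i\}$. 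Taking total expectation over all of $i_0,\dotsc,i_k$ and summing from $T=0$ to $k-1$ then telescopes the distance terms and yields $c \sum_{T=0}^{k-1} \Delta_{T+1} \le \|x_0 - \bx\|_P^2 + (\text{bounded constant})$, which is exactly the summability condition. With both hypotheses of Lemma~\ref{lemma:techniques} verified for $\{\Delta_k\}$, the conclusion $k\Delta_k \to 0$, i.e.\ $\E[f(x_k)] - f^* = o(1/k)$, follows immediately. The only genuinely new work relative to Theorem~\ref{thm:main} is the bookkeeping to pass the conditional expectations through the two recursions and the choice of the reweighted norm that tames the cross term under arbitrary sampling.
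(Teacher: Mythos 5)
Your proposal follows the paper's proof almost step for step: monotonicity of $\Delta_k = \E[f(x_k)]-f^*$ from the componentwise descent inequality, summability from a weighted distance-to-solution recursion whose cross term is tamed by the choice of weights, and then Lemma~\ref{lemma:techniques}. The one place where you commit to a concrete formula for the key device, however, the weighting is inverted. You propose $\|z\|_P^2 = \sum_i (p_i \bar L_i) z_i^2$. With a general weight $w_i$, the conditional expectation over $i_k$ of the cross term in the recursion is $-2\sum_i p_i \frac{w_i}{\bar L_i}\nabla_i f(x_k)(x_k-\bx)_i$, and this collapses to a multiple of $\nabla f(x_k)^\top(x_k-\bx)$ --- the form to which convexity can be applied --- precisely when $p_i w_i/\bar L_i$ is independent of $i$, i.e.\ $w_i \propto \bar L_i/p_i$. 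Your choice gives coefficient $p_i^2$, which is not constant, so for non-uniform sampling the step ``convexity bounds the cross term by $f^*-f(x_k)$'' fails as written. The paper uses exactly $r_T^2 = \sum_i \frac{\bar L_i}{p_i}\nabs{(x_T-\bx)_i}^2$ in \eqref{eq:r}; with that choice the cross term becomes exactly $-2\nabla f(x_T)^\top(x_T-\bx)$, while the quadratic term becomes $\sum_i \nabs{\nabla_i f(x_T)}^2/\bar L_i$, which is bounded by $\frac{2}{\pmin}\left(f(x_T)-\E_{i_T}[f(x_{T+1})]\right)$ via \eqref{eq:cdsuff} and $p_i\ge\pmin$ (your separate full-gradient bound $\E_{i_k}[f(x_{k+1})\mid x_k]\le f(x_k)-\frac{\pmin}{2\bar L}\nabs{\nabla f(x_k)}^2$ is then not needed). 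Once the weight is corrected, your argument telescopes exactly as the paper's does and the proof is complete.
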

\begin{proof}
From \eqref{eq:Ls} and that $\bar L_i \geq L_i$, by treating all other
coordinates as non-variables, we have that for any $T \geq 0$,
\begin{equation}
	f\left( x_T - \frac{\nabla_i f\left( x_T
	\right)}{\bar L_i} e_i \right) - f\left( x_T \right) \leq
	- \frac{1}{2 \bar L_i} \nabs{\nabla_i f\left( x_T \right)}^2,
	i=1,\dotsc,n,
	\label{eq:cdsuff}
\end{equation}
showing that the algorithm decreases $f$ at each iteration.  Consider
any $\bx \in \Omega$, by defining
\begin{equation}
r_T^2 \coloneqq \sum_{i=1}^n \frac{\bar L_i}{p_i} \nabs{\left( x_T - \bx
\right)_i}^2,
\label{eq:r}
\end{equation}
we have from \eqref{eq:cd} that
\begin{equation*}
r_{T+1}^2 = r_T^2 + \frac{1}{ \bar L_{i_T}p_{i_T}} \left\| \nabla_{i_T}
	f\left( x_T \right) \right\|^2 -  \frac{2}{p_{i_T}} \nabla_{i_T} f\left(
	x_T \right)^\top \left( x_T - \bx \right)_{i_T}.
\end{equation*}
By taking expectation over $i_T$ on both sides of the above expression, we
obtain from the convexity of $f$ and \eqref{eq:cdsuff} that
\begin{align}
\nonumber
\E_{i_T}\left[ r_{T+1}^2 \right] - r_T^2
\stackrel{\eqref{eq:cdsuff}}{\leq}&~ \frac{1}{\pmin}
	\sum_{i=1}^n 2 p_i \left( f\left( x_T\right) - f\left( x_T -
	\frac{\nabla_i f\left( x_T \right)}{\bar L_i} e_i \right) \right) - 2
	\nabla f\left( x_T \right)^\top \left( x_T - \bx \right)\\
\leq&~ \frac{2}{\pmin} \left(f \left( x_T \right) - \E_{i_T}
	\left[f\left( x_{T+1} \right)\right]  \right)+ 2 \left( f^* - f\left(
	x_T \right) \right).
\label{eq:tosumcd}
\end{align}
By taking expectation over $i_0,i_1,\dotsc,i_{T-1}$ on
\eqref{eq:tosumcd} and summing \eqref{eq:tosumcd} over $T=0,1,
\dotsc,k$, we obtain
\begin{align*}
	2 \sum_{T=0}^k \left(\E_{i_0,\dotsc,i_{T-1}}
	\left[f(x_T)\right] - f^*\right)
	&~\leq r_0^2 - \E_{i_0,\dotsc,i_k}\left[ r_{k+1}^2\right] + \frac{2 \left(
		f\left( x_{0} \right) - \E_{i_0,\dotsc,i_k}\left[
		f\left( x_{k+1} \right)\right] \right)}{\pmin}\\
&~\leq r_0^2 + \frac{2 \left(f\left( x_0 \right) - f^*\right)  }{\pmin}.
\end{align*}
The result now follows from Lemma~\ref{lemma:techniques}.
\qed
\end{proof}

\section{Regularized Problems}
We turn now to regularized optimization in an inner-product space:
\begin{equation} \label{eq:F}
	\min_{x} \, F(x) \coloneqq f(x) + \psi(x),
\end{equation}
where both terms are convex, $f$ is $L$-Lipschitz-continuously
differentiable, and $\psi$ is extended-valued, proper, and closed, but
possibly nondifferentiable.  We also assume that $\psi$ is such that the
prox-operator can be applied easily, by solving the following
problem for any given $y$ and any $\lambda>0$:
\begin{equation*}
	\min_x\, \psi\left( x \right) + \frac{1}{2 \lambda} \nabs{x - y}^2.
	\label{eq:prox}
\end{equation*}
We assume further that the solution set $\Omega$ of \eqref{eq:F} is
nonempty, and denote by $F^*$ the value of $F$ for all $x\in\Omega$.
We discuss two algorithms to show how our techniques can
be extended to regularized problems.  They are proximal gradient (both
with and without line search) and stochastic proximal coordinate
descent with arbitrary sampling.

\subsection{Short-Step Proximal Gradient}
\label{subsec:shortprox}
Given $\bar L \geq L$, the $k$th step of the proximal gradient
algorithm is defined as follows:
\begin{equation}
	x_{k+1} \leftarrow x_k + d_k,\quad d_k \coloneqq \arg\min_{d}\,
	\nabla f(x_k)^\top
	d + \frac{\bar{L}}{2}\|d\|^2 + \psi\left( x_k+d \right).
	\label{eq:proxgrad}
\end{equation}
Note that $d_k$ is uniquely defined here, since the subproblem is
strongly convex.  It is shown in \cite{BecT09a,Nes13a} that $F(x_k)$
converges to $F^*$ at a rate of $O(1/k)$ for this algorithm, under our
assumptions.  We prove that a $o(1/k)$ rate can be attained.
\begin{theorem}
\label{thm:proxgrad}
Consider \eqref{eq:F} with $f$ convex and $L$-Lipschitz continuously
differentiable, $\psi$ convex, and nonempty solution set $\Omega$.
Given any $\bar L \geq L$, the proximal gradient method
\eqref{eq:proxgrad} generates iterates whose objective value converges
to $F^*$ at a $o(1/k)$ rate.
\end{theorem}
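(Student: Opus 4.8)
The plan is to apply Lemma~\ref{lemma:techniques} to the sequence $\Delta_k \coloneqq F(x_k) - F^*$, exactly as was done for gradient descent in Theorem~\ref{thm:main}. Nonnegativity of $\Delta_k$ is immediate from the optimality of $F^*$, so the two facts left to establish are that $\{\Delta_k\}$ is monotonically decreasing and that it is summable. Both follow from inequalities that are the regularized analogs of \eqref{eq:th2} and \eqref{eq:tosum} used in Section~\ref{sec:main}; the subproblem \eqref{eq:proxgrad} supplies, in place of the Lipschitz-descent rearrangement, a first-order optimality condition that encodes the nonsmooth term $\psi$ through its subdifferential.

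First I would establish a sufficient-decrease inequality. Since $d_k$ minimizes the strongly convex subproblem in \eqref{eq:proxgrad} (with modulus $\bar L$) and $d = 0$ is feasible, comparing the subproblem objective at $d_k$ with its value at $0$ gives $\nabla f(x_k)^\top d_k + \tfrac{\bar L}{2}\nabs{d_k}^2 + \psi(x_k + d_k) \le \psi(x_k)$. Combining this with the Lipschitz upper bound $f(x_{k+1}) \le f(x_k) + \nabla f(x_k)^\top d_k + \tfrac{\bar L}{2}\nabs{d_k}^2$ (valid because $\bar L \ge L$) yields $F(x_{k+1}) \le F(x_k)$, with the refinement $F(x_{k+1}) \le F(x_k) - \tfrac{\bar L}{2}\nabs{d_k}^2$ if one invokes strong convexity in the comparison. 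This establishes monotonicity.

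For summability I would derive the prox-gradient analog of \eqref{eq:tosum}, namely $F(x_{k+1}) - F^* \le \tfrac{\bar L}{2}\left(\nabs{x_k - \bx}^2 - \nabs{x_{k+1} - \bx}^2\right)$ for any fixed $\bx \in \Omega$. The route is to use the optimality condition of the subproblem, which guarantees a subgradient $\xi \coloneqq -\nabla f(x_k) - \bar L(x_{k+1} - x_k) \in \partial\psi(x_{k+1})$. Bounding $f(x_{k+1})$ above by the Lipschitz inequality, $f(x_k)$ below by convexity of $f$ evaluated against $\bx$, and $\psi(x_{k+1})$ by the subgradient inequality $\psi(x_{k+1}) \le \psi(\bx) + \xi^\top(x_{k+1} - \bx)$, then collecting terms, produces a right-hand side $\tfrac{\bar L}{2}\nabs{x_{k+1} - x_k}^2 - \bar L (x_{k+1} - x_k)^\top(x_{k+1} - \bx)$, which the polarization identity rewrites exactly as the telescoping difference above. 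Summing from $T = 0$ to $k-1$ collapses the right-hand side to at most $\tfrac{\bar L}{2}\nabs{x_0 - \bx}^2$, bounding $\sum_{T=1}^{k}\Delta_T$ uniformly in $k$ and hence giving summability. With all three hypotheses of Lemma~\ref{lemma:techniques} verified, $k\Delta_k \to 0$, which is the claimed $o(1/k)$ rate.

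The main obstacle is the correct derivation of this telescoping inequality: unlike the smooth case, where \eqref{eq:th2} follows by directly rearranging the descent lemma, here one must handle the nonsmooth term through the subgradient $\xi$ produced by the subproblem's optimality condition, and verify that the cross term $-\bar L (x_{k+1} - x_k)^\top(x_{k+1} - \bx)$ combines with $\tfrac{\bar L}{2}\nabs{x_{k+1} - x_k}^2$ to give a clean difference of squared distances. Once that identity is in hand, the argument mirrors the smooth proof verbatim, and no boundedness assumption on $\Omega$ is needed because the telescoped distance bound already confines the iterates.
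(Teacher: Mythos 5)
Your proposal is correct and follows essentially the same route as the paper: both use the subproblem's optimality condition to extract a subgradient of $\psi$ at $x_{k+1}$, combine it with convexity of $f$ at $\bx$ and the Lipschitz bound \eqref{eq:hs9} to obtain the telescoping inequality $F(x_{T+1})-F^* \le \tfrac{\bar L}{2}\left(\nabs{x_T-\bx}^2 - \nabs{x_{T+1}-\bx}^2\right)$, and then invoke Lemma~\ref{lemma:techniques}. The only cosmetic difference is that the paper expands $\nabs{x_{T+1}-\bx}^2 - \nabs{x_T-\bx}^2$ and bounds it by $2(F^*-F(x_{T+1}))/\bar L$, whereas you collect terms starting from $F(x_{k+1})-F^*$ and finish with the polarization identity; the ingredients and the resulting bound are identical.
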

\begin{proof}
The method \eqref{eq:proxgrad} can be shown to be a descent method
from the Lipschitz continuity of $\nabla f$ and the fact that
$\bar L \geq L$.  From the optimality of the solution to
\eqref{eq:proxgrad} and that $x_{k+1} = x_k + d_k$,
\begin{equation}
-\left(\nabla f(x_k) + \bar L d_k\right) \in \partial \psi\left(
x_{k+1} \right),
\label{eq:opt}
\end{equation}
where $\partial \psi$ denotes the subdifferential of $\psi$.  Consider
any $\bx \in \Omega$.  We have from \eqref{eq:proxgrad} that for any
$T \geq 0$, the following chain of relationships holds:
\begin{align}
\nonumber
&~\nabs{x_{T+1} - \bx}^2 - \nabs{x_T - \bx}^2\\
\nonumber
= &~ 2 d_T^\top \left( x_T  - \bx
\right) + \nabs{d_T}^2\\
\nonumber
=&~ 2 d_T^\top \left( x_T  +d_T - \bx\right) -
\nabs{d_T}^2\\
\nonumber
=&~ 2 \left(d_T + \frac{\gT}{\bar{L}}\right)^\top
\left( x_{T+1} - \bx\right) - \frac{2}{\bar{L}}  \gT^\top \left( x_T + d_T
- \bx \right) - \nabs{d_T}^2\\
\nonumber
\stackrel{\eqref{eq:opt}}{\leq}&~ 2 \frac{\psi\left( \bx \right) -
	\psi\left( x_{T+1} \right)}{\bar L} - \frac{2}{\bar L} \gT^\top
	\left( x_T - \bx \right) - \frac{2}{\bar L} \gT^\top d_T -
	\nabs{d_T}^2\\
\nonumber
\leq&~ \frac{2}{\bar L}\left(\left(\psi\left( \bx \right) - \psi\left(
	x_{T+1} \right)\right) + f\left( \bx \right) - \left( f\left( x_T
	\right) + \gT^\top d_T + \frac{\bar L \nabs{d_T}^2}{2}\right)\right)\\
\leq&~ \frac{2 \left(F^* - F\left( x_{T+1} \right)\right)}{\bar L},
\label{eq:rkprox}
\end{align}
where in the last inequality, we have used
\begin{equation} \label{eq:hs9}
  f(x+d) \leq f(x) + \nabla f(x)^\top d + \frac{L}{2} \|d\|^2 \leq
  f(x) + \nabla f(x)^\top d + \frac{\bar{L}}{2} \|d\|^2.
\end{equation}
By rearranging \eqref{eq:rkprox} we obtain
\[
F(x_{T+1}) - F^* \le \frac{\bar{L}}{2} \left( \| x_T - \bar{x} \|^2 - \| x_{T+1} - \bar{x} \|^2 \right).
\]
The result follows by summing both sides of this expression over
$T=0,1,\dotsc,k-1$ and applying Lemma~\ref{lemma:techniques}.  \qed
\end{proof}

\subsection{Proximal Gradient with Line Search}
\label{subsec:proxlinesearch}
We discuss a line-search variant of proximal gradient, where the
update is defined as follows:
\begin{equation}
	x_{k+1} \leftarrow x_k + d_k,\quad d_k \coloneqq \arg\min_{d}\,
	\nabla f(x_k)^\top
	d + \frac{1}{2\alpha_k}\|d\|^2 + \psi\left( x_k+d \right),
	\label{eq:sparsa}
\end{equation}
where $\alpha_k$ is chosen such that for given $\gamma \in (0,1]$ and 
$C_1 \ge C_2 > 0$  defined as in \eqref{eq:bdd}, we have
\begin{equation}
	\alpha_k \in [C_2, C_1],\quad
	F\left( x_k + d_k \right) \leq F\left( x_k \right) -
	\frac{\gamma}{2 \alpha_k}\|d_k\|^2.
	\label{eq:alpha}
\end{equation}
This framework is a generalization of that in Section
\ref{subsec:linesearch}, and includes the SpaRSA algorithm
of \cite{WriNF09a}, which obtains an initial choice of $\alpha_k$ from a
Barzilai-Borwein approach and adjusts it until \eqref{eq:alpha} holds.
The approach of the previous subsection can also be seen as a special
case of \eqref{eq:sparsa}-\eqref{eq:alpha} through the following
elementary result, whose proof is omitted.
\begin{lemma}
\label{lemma:suff}
Consider a convex function $\psi$, a positive scalar $a >0$ and two
vectors $b$ and $x$. If $d$ is the unique solution of the strictly
convex problem
\begin{equation*}
\min_d\, b^\top d + \frac{a}{2}  \nabs{d}^2 + \psi(x+d),
\end{equation*}
then
\begin{equation}
	b^\top d + \frac{a}{2} \nabs{d}^2 + \psi(x+d) - \psi(x) \leq
	-\frac{a}{2} \nabs{d}^2.
	\label{eq:suff}
\end{equation}
\end{lemma}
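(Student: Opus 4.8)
The plan is to exploit the first-order optimality conditions of the strongly convex subproblem together with the subgradient inequality for the convex function $\psi$. Denote the objective of the subproblem by $g(d) \coloneqq b^\top d + \frac{a}{2}\nabs{d}^2 + \psi(x+d)$. Since $d$ is its (unique) minimizer, the optimality condition for this nonsmooth but convex problem reads $0 \in b + a d + \partial \psi(x+d)$. Hence there exists a subgradient $s \in \partial \psi(x+d)$ with $s = -(b + a d)$.

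The key step is to bound the change $\psi(x+d) - \psi(x)$ using this particular subgradient. By the defining subgradient inequality for $s \in \partial\psi(x+d)$, evaluated at the point $x$, we have $\psi(x) \geq \psi(x+d) + s^\top\bigl(x - (x+d)\bigr) = \psi(x+d) - s^\top d$, which rearranges to $\psi(x+d) - \psi(x) \leq s^\top d = -(b + a d)^\top d = -b^\top d - a\nabs{d}^2$.

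Finally I would substitute this bound into the left-hand side of \eqref{eq:suff}: adding $b^\top d + \frac{a}{2}\nabs{d}^2$ cancels the $b^\top d$ term and leaves $\frac{a}{2}\nabs{d}^2 - a\nabs{d}^2 = -\frac{a}{2}\nabs{d}^2$, which is exactly the claimed bound. There is no serious obstacle here; the only points requiring minor care are the direction of the subgradient inequality (choosing the subgradient at $x+d$ and testing against $x$, rather than the reverse) and the observation that the optimality condition supplies \emph{precisely} the subgradient $-(b+ad)$, not merely some unspecified element of the subdifferential. Strong convexity, arising from the $\frac{a}{2}\nabs{d}^2$ term with $a>0$, guarantees uniqueness of $d$ but plays no further role in establishing the inequality itself.
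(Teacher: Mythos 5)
Your proof is correct. The paper states this lemma with its proof omitted, so there is no argument of record to compare against; your route via the optimality condition $0 \in b + a d + \partial\psi(x+d)$ and the subgradient inequality at $x+d$ tested against $x$ is the standard one and every step checks out. For what it is worth, there is an equally short alternative that explains where the $-\tfrac{a}{2}\nabs{d}^2$ slack comes from: the subproblem objective $g(d) = b^\top d + \tfrac{a}{2}\nabs{d}^2 + \psi(x+d)$ is $a$-strongly convex, so its minimizer $d$ satisfies $g(0) \geq g(d) + \tfrac{a}{2}\nabs{0-d}^2$, which is precisely \eqref{eq:suff} after noting $g(0)=\psi(x)$. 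Your closing remark that strong convexity ``plays no further role'' is accurate for your argument, but this second route shows it can instead carry the whole proof.
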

By setting $b = \nabla f(x)$, $1/\alpha_k \equiv a = \bar L > 0$
(where $\bar{L} \ge L$), this lemma together with \eqref{eq:hs9}
implies that \eqref{eq:alpha} holds for any $\gamma \in (0,1]$.
Moreover, it also implies that for any $k \ge 0$,
\begin{align*}
	F\left( x_{k+1} \right) - F\left( x_k \right)
	= &~ f\left( x_k + d_k \right) - f\left( x_k \right) + \psi\left( x_k + d_k \right) -
	\psi \left( x_k \right)
	\\
	\stackrel{\eqref{eq:hs9}}{\leq}
	&~ \nabla f\left( x_k \right)^\top d_k +
	\frac{L}{2}\left\|d_k\right\|^2 + \psi\left( x_k + d_k \right) -
	\psi \left( x_k \right)\\
	=&~ \nabla f\left( x_k \right)^\top d_k +
	\frac{1}{2\alpha_k}\left\|d_k\right\|^2 + \psi\left( x_k + d_k
	\right) - \psi \left( x_k \right)
	+ \left(\frac{L}{2} -
	\frac{1}{2\alpha_k}\right) \left\|d_k\right\|^2\\
	\stackrel{\eqref{eq:suff}}{\le}&~ - \frac{1}{2\alpha_k}
	\left\|d_k\right\|^2 +
	\left(\frac{L}{2} -
	\frac{1}{2\alpha_k}\right)\left\|d_k\right\|^2\\
	=& -\left(\frac{1}{\alpha_k} - \frac{L}{2}
	\right)\left\|d_k\right\|^2.
\end{align*}
Therefore, for any $\gamma \in (0,1]$, \eqref{eq:alpha} holds
whenever 
\[
	\alpha > 0,
	-\frac{\gamma}{2 \alpha_k} \geq - \left(\frac{1}{\alpha_k} -
	\frac{L}{2}\right),
\]
or equivalently $$\alpha_k \in \left(0,\frac{2 - \gamma}{L}\right],$$ which is how the
upper bound for $C_2$ is set.

We show now that this approach also has a $o(1/k)$ convergence rate
on convex problems.
\begin{theorem}
\label{thm:proxline}
Consider \eqref{eq:F} with $f$ convex and $L$-Lipschitz continuously
differentiable, $\psi$ convex, and nonempty solution set $\Omega$.
Given some $\gamma \in (0,1]$ and $C_2$ and $C_1$
such that $C_1 \geq C_2$ and $C_2 \in (0, (2 - \gamma)/L]$, then the
algorithm \eqref{eq:sparsa} with $\alpha_k$ satisfying
\eqref{eq:alpha} generates iterates $\{ x_k \}$ whose objective values
converge to $F^*$ at a rate of $o(1/k)$. Moreover, the
sequence of iterates  is bounded.
\end{theorem}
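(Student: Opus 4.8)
The plan is to invoke Lemma~\ref{lemma:techniques} with $\Delta_k \coloneqq F(x_k) - F^*$, which is nonnegative by optimality of $F^*$ and monotonically decreasing by the sufficient-decrease condition in \eqref{eq:alpha}. Thus only summability of $\{\Delta_k\}$ needs to be established, and the boundedness of the iterates will emerge from the very same estimate. I would mirror the chain of relationships used in the proof of Theorem~\ref{thm:proxgrad}, replacing the fixed factor $1/\bar L$ there by the variable steplength $\alpha_T$ from \eqref{eq:sparsa}.

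Concretely, the first-order optimality condition for the subproblem \eqref{eq:sparsa} reads $-(\nabla f(x_T) + \alpha_T^{-1} d_T) \in \partial\psi(x_{T+1})$, the analogue of \eqref{eq:opt}. Expanding $\nabs{x_{T+1}-\bx}^2 - \nabs{x_T-\bx}^2$ exactly as in \eqref{eq:rkprox} (with $1/\bar L$ replaced by $\alpha_T$), then using this optimality condition together with the convexity of $\psi$ and of $f$, and finally invoking the Lipschitz bound $f(x_{T+1}) \le f(x_T) + \gT^\top d_T + (L/2)\nabs{d_T}^2$, I would arrive at an estimate of the form
\begin{equation*}
\nabs{x_{T+1}-\bx}^2 - \nabs{x_T-\bx}^2 \le 2\alpha_T\left(F^* - F(x_{T+1})\right) + \left(\alpha_T L - 1\right)\nabs{d_T}^2.
\end{equation*}

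The new feature relative to Theorem~\ref{thm:proxgrad} — and the step I expect to be the main obstacle — is the residual term $(\alpha_T L - 1)\nabs{d_T}^2$. In the short-step analysis the choice $\bar L \ge L$ made the analogous quantity nonpositive, but here $\alpha_T$ may be as large as $(2-\gamma)/L$, so $\alpha_T^{-1}$ can fall below $L$ and the residual need not vanish. The key observation is that $\alpha_T \le C_2 \le (2-\gamma)/L$ gives $\alpha_T L - 1 \le 1-\gamma$, with $1-\gamma \ge 0$, while the sufficient-decrease condition \eqref{eq:alpha} gives $\nabs{d_T}^2 \le (2\alpha_T/\gamma)(F(x_T) - F(x_{T+1}))$. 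Combining these, and using $C_2 \le \alpha_T \le C_1$ together with $\Delta_{T+1} \ge 0$ and $F(x_T) - F(x_{T+1}) \ge 0$, I would telescope over $T = 0,1,\dotsc,k-1$ to obtain
\begin{equation*}
2 C_2 \sum_{T=1}^{k} \Delta_T \le \nabs{x_0 - \bx}^2 + \frac{2 C_1 (1-\gamma)}{\gamma}\Delta_0,
\end{equation*}
which establishes summability. Retaining instead only the nonpositive contributions on the right-hand side of the same telescoped inequality yields $\nabs{x_k-\bx}^2 \le \nabs{x_0-\bx}^2 + \frac{2 C_1(1-\gamma)}{\gamma}\Delta_0$, so the iterates lie in a bounded set. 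With monotonicity and summability both in hand, Lemma~\ref{lemma:techniques} delivers the claimed $o(1/k)$ rate.
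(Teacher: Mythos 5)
Your overall strategy is exactly the paper's: derive the analogue of \eqref{eq:rkprox} with $1/\bar L$ replaced by $\alpha_T$, absorb the residual $\nabs{d_T}^2$ term via the sufficient-decrease condition \eqref{eq:alpha}, telescope to get summability of $\Delta_T = F(x_T)-F^*$, and invoke Lemma~\ref{lemma:techniques}; boundedness falls out of the same telescoped inequality. The intermediate estimate
$\nabs{x_{T+1}-\bx}^2 - \nabs{x_T-\bx}^2 \le 2\alpha_T(F^*-F(x_{T+1})) + (\alpha_T L - 1)\nabs{d_T}^2$
is correct and matches what the paper obtains (the paper simply drops the $-\nabs{d_T}^2$ earlier and carries $+\alpha_T L\nabs{d_T}^2$).

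However, there is a genuine error in the step you yourself flagged as the main obstacle. You write ``$\alpha_T \le C_2 \le (2-\gamma)/L$ gives $\alpha_T L - 1 \le 1-\gamma$,'' but the constraint \eqref{eq:alpha} is $\alpha_T \in [C_2, C_1]$, so $C_2$ is a \emph{lower} bound on $\alpha_T$ (you even use $C_2 \le \alpha_T \le C_1$ correctly two lines later). The theorem places no upper bound on $C_1$ beyond $C_1 \ge C_2$; the cap $(2-\gamma)/L$ applies only to $C_2$ and serves to guarantee that the line search can always terminate with some acceptable $\alpha_T \ge C_2$. In the intended applications (e.g., SpaRSA with Barzilai--Borwein initial steps) $\alpha_T$ routinely exceeds $(2-\gamma)/L$, so $\alpha_T L - 1$ can be an arbitrarily large positive number and your bound $\alpha_T L - 1 \le 1-\gamma$ fails; in the extreme case $\gamma=1$ your telescoped inequality would drop the residual entirely, which is false. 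The repair is short and is what the paper does: bound $(\alpha_T L - 1)\nabs{d_T}^2 \le \alpha_T L\,\nabs{d_T}^2 \le \frac{2L\alpha_T^2}{\gamma}\bigl(F(x_T)-F(x_{T+1})\bigr) \le \frac{2LC_1^2}{\gamma}\bigl(F(x_T)-F(x_{T+1})\bigr)$, using only $\alpha_T \le C_1$ and the sufficient decrease; this is still a telescoping, summable quantity, and the rest of your argument (summability, boundedness, Lemma~\ref{lemma:techniques}) goes through with $\frac{2LC_1^2}{\gamma}\Delta_0$ in place of $\frac{2C_1(1-\gamma)}{\gamma}\Delta_0$.
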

\begin{proof}
From the optimality conditions of \eqref{eq:sparsa}, we have
\begin{equation}
-\left(\nabla f(x_T) + \frac{1}{\alpha_T} d_T \right)\in \partial \psi\left(
x_{T+1} \right).
\label{eq:opt2}
\end{equation}
Now consider any $\bx \in \Omega$.  We have from \eqref{eq:sparsa}
that for any $T \geq 0$, the following chain of relationships holds:
\begin{align}
\nonumber
&~\nabs{x_{T+1} - \bx}^2 - \nabs{x_T - \bx}^2\\
\nonumber
=&~ 2 d_T^\top \left( x_T  +d_T - \bx\right) -
\nabs{d_T}^2\\
\nonumber
=&~ 2 \left(d_T + \alpha_T \gT\right)^\top
\left( x_{T+1} - \bx\right) - {2}{\alpha_T} \gT^\top \left( x_T + d_T
- \bx \right) - \nabs{d_T}^2\\
\nonumber
\stackrel{\eqref{eq:opt2}}{\leq}&~ 2 \alpha_T \left( {\psi\left( \bx \right) - \psi\left( x_{T+1}
\right)} \right) - {2}{\alpha_T} \gT^\top \left( x_T - \bx \right) -
{2}{\alpha_T} \gT^\top d_T - \|d_T \|^2 \\
\nonumber
\leq&~ 2 \alpha_T \left( {\psi\left( \bx \right) - \psi\left( x_{T+1}
\right)} \right) - {2}{\alpha_T} \gT^\top \left( x_T - \bx \right) -
{2}{\alpha_T} \gT^\top d_T\\
\nonumber
= &~ 2 \alpha_T \left( {\psi\left( \bx \right) - \psi\left( x_{T+1}
\right)} \right) - {2}{\alpha_T} \gT^\top \left( x_T - \bx \right) -
{2}{\alpha_T} \gT^\top d_T +
 \alpha_T L \nabs{d_T}^2 - \alpha_T L \nabs{d_T}^2\\
\nonumber
\leq&~ 2\alpha_T \left(\psi\left( \bx \right) - \psi\left(
	x_{T+1}\right) + f\left( \bx \right) - \left(f\left( x_T \right) +
	\gT^\top d_T + \frac{L}{2}\nabs{d_T}^2\right) \right)+
         \alpha_T L \nabs{d_T}^2\\
\nonumber
\stackrel{\eqref{eq:alpha}}{\leq}&~ {2 \alpha_T \left(F^* - F\left(
	x_{T+1} \right)\right)} + \frac{2 L \alpha_T^2}{\gamma} \left(
	F(x_T) - F(x_{T+1}) \right)\\
\leq&~{2}{C_2}\left( F^* - F\left( x_{T+1} \right)
	\right) + \frac{2LC_1^2}{\gamma} \left( F\left( x_T \right)
	- F\left( x_{T+1} \right) \right).
\label{eq:rkprox2}
\end{align}
By rearrangement, of this inequality, we obtain
\begin{align*}
   F(x_{T+1})-F^* 
    \le \frac{L C_1^2}{\gamma C_2}(F(x_T)-F(x_{T+1}))
   + \frac{1}{2 C_2} \left( \| x_{T}-\bar{x}\|^2 - \| x_{T+1}-\bar{x}\|^2\right),
\end{align*}
and by summing both sides and using telescoping sums, we find that
$\sum_{T=0}^\infty (F(x_{T+1})-F^*) < \infty$, thus the conditions of
Lemma~\ref{lemma:techniques} are satisfied by $\Delta_T :=
F(x_T)-F^*$, and the $o(1/k)$ rate follows.

  By summing the inequality above finitely over $T=0,1,\dotsc,k-1$, we obtain
  \begin{align*}
    0 \le \sum_{T=0}^{k-1} (F(x_{T+1})-F^*) \le \frac{L
	C_1^2}{\gamma C_2} (F(x_0)-F^*)
     + \frac{1}{2 C_2} \left(  \|x_0 - \bar{x}\|^2 -  \|x_k - \bar{x}\|^2 \right).
  \end{align*}
  By rearranging this inequality, we obtain a uniform upper bound on
  $\|x_k - \bar{x} \|$, thus showing that the sequence $\{ x_k \}$ is
  bounded.
  \qed
\end{proof}


\subsection{Proximal Coordinate Descent}
We now discuss the extension of coordinate descent to \eqref{eq:F},
with the assumption \eqref{eq:Ls} on $f$, Euclidean domain of dimension $n$,
sampling weighted according to \eqref{eq:prob} as in
Section~\ref{subsec:cd}, and the additional assumption of separability
of the regularizer $\psi$, that is,
\begin{equation}
\psi(x) = \sum_{i=1}^n \psi_i(x_i),
\label{eq:separable}
\end{equation}
where each $\psi_i$ is convex, extended valued, and possibly
nondifferentiable.  As in our discussion of Section~\ref{subsec:cd},
the results in this subsection can be extended directly to the case of
block-coordinate descent.

Given the component-wise Lipschitz constants $L_1,L_2,\dotsc,L_n$ and
algorithmic parameters $\bar L_1, \bar{L}_2,\dotsc,\bar L_n$ with $\bar
L_i \geq L_i$ for all $i$, proximal coordinate descent updates have
the form
\begin{equation}
x_{k+1} \leftarrow x_k + d^k_{i_k} e_{i_k},\quad
d^k_{i_k} \coloneqq \arg\min_{d \in \Re}\, \nabla_{i_k} f(x_k) d +
	\frac{\bar{L}_{i_k}}{2}d^2 + \psi_{i_k}\left( (x_k)_{i_k} +
	d\right).
\label{eq:proxcd}
\end{equation}
With $p_i \equiv 1 / n$ for all $i$,
\cite{LuX15a} showed that the expected objective value converges to
$F^*$ at a $O(1/k)$ rate.  When arbitrary sampling \eqref{eq:prob} is
considered, \eqref{eq:proxcd} is a special case of the general
algorithmic framework described in \cite{LeeW18b}.  The latter paper
shows the same $O(1/k)$ rate for convex problems under the additional
assumption that for any $x_0$, we have
\begin{equation}
	\max_{x: F\left( x \right) \leq F\left( x_0 \right)}\, \dist\left(
	x, \Omega \right) < \infty.
	\label{eq:bddset}
\end{equation}

We show here that with arbitrary sampling according to
\eqref{eq:prob}, \eqref{eq:proxcd} produces $o(1/k)$ convergence rates
for the expected objective on convex problems, without the assumption
\eqref{eq:bddset}.

The following result makes use of the quantity $r_k$ defined in
\eqref{eq:r}.
\begin{theorem}
	\label{thm:proxcd}
  Consider \eqref{eq:F} with $f$ and $\psi$ convex and nonempty
  solution set $\Omega$. Assume further that
  \eqref{eq:separable} is true, and that \eqref{eq:Ls}
  holds with some $L_1,L_2,\dotsc,L_n > 0$.  Given $\{\bar
  L_i\}_{i=1}^n$ with $\bar L_i \geq L_i$ for all $i$, suppose that
  proximal coordinate descent defines iterates according to
  \eqref{eq:proxcd}, with $i_k$ chosen i.i.d.  according to a
  probability distribution satisfying \eqref{eq:prob}. Then
  $\E_{i_0,i_1,\dotsc,i_{k-1}}[F(x_k)]$ converges to $F^*$ at an
  asymptotic rate of $o(1/k)$.
  Moreover, given any $\bx \in \Omega$, the sequence of $\E_{i_0,\dotsc, i_{k-1}}
  r_k^2$ is bounded.
\end{theorem}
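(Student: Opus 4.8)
The plan is to fuse the potential-function argument used for stochastic coordinate descent in the proof of Theorem~\ref{thm:cd} with the proximal chain of inequalities developed for Theorem~\ref{thm:proxgrad}. The target is a one-step estimate of exactly the same shape as \eqref{eq:tosumcd}, but with $F$ in place of $f$, namely
\begin{equation*}
\E_{i_T}\left[ r_{T+1}^2 \right] - r_T^2 \le \frac{2}{\pmin}\left( F\left( x_T \right) - \E_{i_T}\left[ F\left( x_{T+1} \right) \right] \right) + 2\left( F^* - F\left( x_T \right) \right),
\end{equation*}
with $r_T$ as in \eqref{eq:r}. Once this is in hand, both conclusions follow precisely as in Theorem~\ref{thm:cd}: the descent property of \eqref{eq:proxcd} (which follows from Lemma~\ref{lemma:suff} together with \eqref{eq:Ls}) gives monotonicity, taking total expectation and telescoping the first term yields summability of $\E[F(x_T)] - F^*$, and Lemma~\ref{lemma:techniques} applied to $\Delta_T \coloneqq \E_{i_0,\dotsc,i_{T-1}}[F(x_T)] - F^*$ delivers the $o(1/k)$ rate.

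To obtain the one-step estimate, I would first note that only coordinate $i_T$ changes, so writing $d_T \coloneqq d^T_{i_T} e_{i_T}$ gives $r_{T+1}^2 - r_T^2 = (\bar L_{i_T}/p_{i_T})(2 d_T^\top(x_T - \bx) + \nabs{d_T}^2)$. I would then replay the algebra of \eqref{eq:rkprox} on this single coordinate: rewrite $2 d_T^\top(x_T-\bx) + \nabs{d_T}^2 = 2 d_T^\top(x_{T+1}-\bx) - \nabs{d_T}^2$, split off $(1/\bar L_{i_T})\nabla_{i_T} f(x_T) e_{i_T}$ so as to invoke the optimality condition $-(\nabla_{i_T} f(x_T) + \bar L_{i_T} d^T_{i_T}) \in \partial \psi_{i_T}((x_{T+1})_{i_T})$ together with convexity of $\psi_{i_T}$, and apply the componentwise quadratic upper bound implied by \eqref{eq:Ls}, in the form $\nabla_{i_T} f(x_T)\, d^T_{i_T} \ge f(x_{T+1}) - f(x_T) - (\bar L_{i_T}/2)(d^T_{i_T})^2$, to cancel the residual $\nabs{d_T}^2$ and introduce $f(x_T) - f(x_{T+1})$. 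Multiplying back by $\bar L_{i_T}/p_{i_T}$ produces a bound on $r_{T+1}^2 - r_T^2$ in which every term carries a factor $1/p_{i_T}$.

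Taking $\E_{i_T}[\cdot]$ is where the design of $r_T$ pays off and where I expect the real work to lie. The weight $1/p_{i_T}$ cancels the sampling probability, so the gradient term reassembles into the full inner product $\nabla f(x_T)^\top(x_T - \bx)$, which I bound below by $f(x_T) - f^*$ by convexity of $f$; meanwhile the $f$- and $\psi$-terms become \emph{unweighted} sums $\sum_i$ over coordinates. The main obstacle is the bookkeeping of the regularizer: because a single-coordinate step leaves all other components fixed, separability \eqref{eq:separable} must be used to recombine $f(x_T)-f(x_{T+1}^{(i)})$ with $\psi_i((x_T)_i)-\psi_i((x_{T+1}^{(i)})_i)$ into the per-coordinate full decrease $F(x_T) - F(x_{T+1}^{(i)})$ (where $x_{T+1}^{(i)}$ denotes the iterate produced when coordinate $i$ is selected), with the leftover $\psi(\bx)-\psi(x_T)$ merging with the convexity bound to form the gap $2(F^* - F(x_T))$. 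Since each per-coordinate decrease is nonnegative by the descent property, the bound $p_i \ge \pmin$ gives $\sum_i (F(x_T) - F(x_{T+1}^{(i)})) \le \pmin^{-1}(F(x_T) - \E_{i_T}[F(x_{T+1})])$, which supplies the factor $2/\pmin$ and completes the one-step estimate.

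Finally, the boundedness of $\E r_k^2$ comes for free from the same estimate: the gap term $2(F^* - F(x_T))$ is nonpositive, so dropping it, taking total expectation, and telescoping the remaining $\pmin^{-1}(F(x_T)-\E[F(x_{T+1})])$ gives the uniform bound $\E[r_{k+1}^2] \le r_0^2 + (2/\pmin)(F(x_0) - F^*)$, independent of $k$.
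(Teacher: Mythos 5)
Your proposal is correct and follows essentially the same route as the paper: the same weighted potential $r_T^2$ from \eqref{eq:r}, the same single-coordinate replay of the proximal-gradient algebra via the subproblem optimality condition and the componentwise Lipschitz bound, the same expectation step in which the $1/p_{i_T}$ weights cancel to reassemble $\nabla f(x_T)^\top(x_T-\bx)$ and the descent property with $p_i \ge \pmin$ yields the factor $2/\pmin$, arriving at exactly the paper's one-step estimate \eqref{eq:tosumcd3}. The subsequent telescoping, the application of Lemma~\ref{lemma:techniques}, and the boundedness argument for $\E[r_k^2]$ all match the paper's proof.
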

\begin{proof}
From \eqref{eq:Ls}, we first notice that in the update
\eqref{eq:proxcd},
\begin{align}
F\left( x_k + d^k_{i_k} e_{i_k} \right)  -  F\left( x_k \right) \leq &~
	\nabla_{i_k} f(x_k) d^k_{i_k} +
	\frac{\bar{L}_{i_k}}{2} \left(d^k_{i_k}\right)^2+
 \psi_{i_k}\left( \left(x_k\right)_{i_k} + d^k_{i_k}\right) -
	\psi_{i_k}\left( \left(x_k\right)_{i_k} \right).
\label{eq:suffCD2}
\end{align}
From Lemma~\ref{lemma:suff}, the method defined by \eqref{eq:proxcd}
is a descent method.  Optimality  of the subproblem in
\eqref{eq:proxcd} yields
\begin{equation}
-\left(\nabla_{i_T} f\left( x_T \right) + \bar L_{i_T} d^T_{i_T}
\right) \in \partial \psi_{i_T} \left( \left( x_T \right)_{i_T} +
d^T_{i_T} \right).
\label{eq:partial}
\end{equation}
By taking any $\bx \in \Omega$, and using the definition \eqref{eq:r}, we
have:
\begin{align}
\nonumber
r_{T+1}^2
=&~ r_T^2 + \frac{2 \bar L_{i_T}}{p_{i_T}} \left(d_{i_T}^\top\right)^\top
\left( x_T + d_{i_T}^T - \bx \right)_{i_T} - \frac{\bar
	L_{i_T}}{p_{i_T}} \left(d_{i_T}^T\right)^2\\
\nonumber
=&~ r_T^2 + \frac{2}{p_{i_T}} \left(\nabla_{i_T} f\left( x_T
	\right) + \bar L_{i_T}d_{i_T}^T\right)^\top \left( x_T +
	d_{i_T}^T - \bx \right)_{i_T}
	- \frac{\bar L_{i_T}}{p_{i_T}} \left( d_{i_T}^T \right)^2 \\
\nonumber
&\quad - \frac{2}{p_{i_T}} \nabla_{i_T} f\left(
	x_T \right)^\top \left( x_T - \bx \right)_{i_T} - \frac{2}{p_{i_T}}
	\nabla_{i_T} f\left( x_T \right)^\top d^T_{i_T}\\
\nonumber
\stackrel{\eqref{eq:partial}}{\leq}&~ r_T^2 + \frac{2}{p_{i_T}}
	\left(\psi_{i_T} \left( \bx_{i_T} \right) - \psi_{i_T} \left(
	\left( x_T\right)_{i_T} + d^T_{i_T} \right)  - \nabla_{i_T}
	f\left( x_T \right)^\top \left( x_T - \bx \right)_{i_T}\right)\\
\nonumber
&\quad -\frac{2}{p_{i_T}} \left(\nabla_{i_T} f\left( x_T \right)^\top
	d^T_{i_T} + \frac{\bar L_{i_T}}{2} \nabs{d_{i_T}^k}^2 \right)\\
\label{eq:rkcd}
\leq&~ r_T^2 + \frac{2}{p_{i_T}} \left(\psi_{i_T} \left( \bx_{i_T}
	\right) - \psi_{i_T} \left(\left(x_T\right)_{i_T} \right) -
	\nabla_{i_T} f\left( x_T \right)^\top \left(x_T - \bx
	\right)_{i_T}\right)\\
\nonumber
&\quad -\frac{2}{p_{i_T}} \left(\nabla_{i_T} f\left( x_T \right)^\top
	d^T_{i_T} + \frac{\bar L_{i_T}}{2} \nabs{d_{i_T}^T}^2 +
	\psi_{i_T} \left( \left( x_T\right)_{i_T} + d^T_{i_T} \right) -
	\psi_{i_T} \left(\left(x_T\right)_{i_T} \right) \right).
\end{align}
By taking expectation over $i_T$ on both sides of \eqref{eq:rkcd} and
using the convexity of $f$ together with \eqref{eq:suffCD2}, we obtain
\begin{subequations}
\begin{align}
\nonumber
&~\E_{i_T}\left[ r_{T+1}^2 \right] - r_T^2\\
\nonumber
\leq &~ 2\left( \psi\left( \bx \right) - \psi\left(x_T\right) +
	f\left( \bx \right) - f\left( x_T \right)\right)
+ 2 \left( \sum_{i=1}^n F\left( x_T \right) - F\left( x_T + d^T_i e_i
\right) \right)\\
\label{eq:toexplain}
\leq&~ 2 \left( F^* - F\left( x_T \right) \right)
+ \frac{2}{\pmin} \sum_{i=1}^n p_i \left( F\left( x_T
\right) -  F\left( x_T + d^T_i e_i \right) \right)\\
\label{eq:tosumcd3}
=&~ 2 \left( F^* - F\left( x_T \right) \right) + \frac{2}{\pmin} \left(F\left( x_T \right) - \E_{i_T} \left[F\left( x_{T+1}
	\right)\right]\right),
\end{align}
\end{subequations}
where in \eqref{eq:toexplain} we used the fact that \eqref{eq:proxcd}
is a descent method.  By taking expectation over $i_0,\dotsc,i_k$
on \eqref{eq:tosumcd3}, summing over $T=0,\dotsc,k$, and applying
Lemma~\ref{lemma:techniques}, we obtain the result.

Boundedness of $\E_{i_0,\dotsc,i_{k-1}} [r_k^2]$ follows from the same
telescoping sum and the fact that $F(x_k)$ decreases monotonically
with $k$.
\qed
\end{proof}

Our result shows that, similar to gradient descent and proximal
gradient, proximal coordinate descent and coordinate descent also
provide a form of implicit regularization in that the expected value
of $r_k$ is bounded.  Since $r_k$ can be viewed as a weighted
Euclidean norm, this observation implies that the iterates are also in
a sense expected to lie within a bounded region.

Our analysis here improves the rates in
\cite{LuX15a,LeeW18b} in terms of the dependency on $k$ and removes
the assumption of \eqref{eq:bdd} in \cite{LeeW18b}.  Even aside from
the improvement from $O(1/k)$ to $o(1/k)$, Theorem~\ref{thm:proxcd} is
the first time that a convergence rate for proximal stochastic
coordinate descent with arbitrary sampling for the coordinates is
proven without additional assumptions such as \eqref{eq:bddset}.  By
manipulating \eqref{eq:tosumcd3}, one can also observe how different
probability distributions affect the upper bound, and it might also be
possible to get better upper bounds by using norms different from
\eqref{eq:r}.

\section{Tightness of the $o(1/k)$ Estimate}
We demonstrate that the $o(1/k)$ estimate of convergence of $\{f(x_k)
\}$ is tight by showing that for any $\epsilon \in (0,1]$, there is a
  convex smooth function for which the sequence of function values
  generated by gradient descent with a fixed step size converges
  slower than $O(1/k^{1+\epsilon})$.  The example problem we provide
  is a simple one-dimensional function, so it serves also as a special
  case of stochastic coordinate descent and the proximal methods
  (where $\psi \equiv 0$) as well.  Thus, this example shows tightness
  of our analysis for all methods without line search considered in
  this paper.

Consider the one-dimensional real convex function
\begin{equation}
f(x) = x^p,
\label{eq:xp}
\end{equation}
where
$p$ is an even integer greater than $2$. The minimizer of this
function is clearly at
$x^*=0$, for which $f(0)=f^*=0$. Suppose that the gradient descent method
is applied starting from $x_0=1$. For any descent method, the
iterates $x_k$ are confined to $[-1,1]$ and we have
\[
\| \nabla^2 f(x) \| \le p(p-1)  \;\; \mbox{for all $x$ with
  $|x| \le 1$,}
\]
so we set $L = p(p-1)$. Suppose that $\bar\alpha \in
(0,2/L)$ as above. Then the iteration formula is
\begin{equation}
\label{eq:update}
x_{k+1} = x_k - \bar\alpha \nabla f(x_k) = x_k \left( 1- p \bar\alpha x_k^{p-2} \right),
\end{equation}
and by Lemma \ref{lemma:rk}, all iterates lie in a bounded set: the
level set $[-1,1]$ defined by $x_0$. In fact, since $p \ge 4$ and
$\bar\alpha \in (0,2/L)$, we have that
\begin{align*}
x_k \in (0,1] \; \Rightarrow \; 1-p\bar\alpha x_k^{p-2} &  \in \left(
  1-\frac{2p}{p(p-1)} x_k^{p-2}, 1 \right)
  \subseteq \left( 1-\frac{2}{p-1},1 \right) \subseteq \left( \frac23,
  1 \right),
\end{align*}
so that $x_{k+1} \in \left(\tfrac23 x_k,x_k \right)$ and the value of $L$
remains valid for all iterates.

We show by an informal argument that there exists a constant $C$ such that
\begin{equation} \label{eq:jw0}
  f(x_k) \approx \frac{C}{k^{p/(p-2)}}, \quad \mbox{for all $k$ sufficiently large.}
\end{equation}
From \eqref{eq:update} we have
\begin{equation} \label{eq:yj8}
f(x_{k+1}) = x_{k+1}^p = x_k^p \left( 1- p \bar\alpha x_k^{p-2} \right)^p =
f(x_k) \left( 1- p \bar\alpha f(x_k)^{(p-2)/p} \right)^p.
\end{equation}
By substituting the hypothesis \eqref{eq:jw0} into \eqref{eq:yj8}, and
taking $k$ to be large, we obtain the following sequence of equivalent
approximate equalities:
\begin{alignat*}{2}
 && \frac{C}{(k+1)^{p/(p-2)}}  & \approx \frac{C}{k^{p/(p-2)}}
  \left( 1- p \bar\alpha \frac{C^{(p-2)/p}}{k} \right)^p \\
 & \Leftrightarrow & \;\; \left( \frac{k}{k+1} \right)^{p/(p-2)} & \approx
  \left( 1- p \bar\alpha \frac{C^{(p-2)/p}}{k} \right)^p \\
 & \Leftrightarrow &\;\; \left( 1- \frac{1}{k+1} \right)^{p/(p-2)} & \approx
   1- p^2 \bar\alpha \frac{C^{(p-2)/p}}{k} \\
  & \Leftrightarrow & \;\;  1- \frac{p}{p-2} \frac{1}{k+1} & \approx
  1- p^2 \bar\alpha \frac{C^{(p-2)/p}}{k}
\end{alignat*}
This last expression is approximately satisfied for large $k$ if $C$
satisfies the expression
\[
\frac{p}{p-2} = p^2 \bar\alpha C^{(p-2)/p}.
\]

Stated another way, our result \eqref{eq:jw0} indicates that a
convergence rate faster than $O(1/k^{1+\epsilon})$ is not possible
when steepest descent with fixed steplength is applied to the function
$f(x) =x^p$ provided that
\[
\frac{p}{p-2} \le 1+\epsilon,
\]
that is,
\[
p \ge 2 \frac{1+\epsilon}{\epsilon} \;\; \makebox{and $p$ is a positive even integer.}
\]

We follow \cite{AttCPR18a} to provide a continuous-time analysis of
the same objective function, using a gradient flow argument.  For the
function $f$ defined by \eqref{eq:xp}, consider the following
differential equation:
\begin{equation}
	x'(t) = -\alpha \nabla f(x(t)).
	\label{eq:gradflow}
\end{equation}
Suppose that
\begin{equation}
	x(t) = t^{-\theta}
	\label{eq:xt}
\end{equation}
for some $\theta > 0$, which indicates that starting from any $t > 0$,
$x(t)$ lies in a bounded area.  Substituting \eqref{eq:xt} into
\eqref{eq:gradflow}, we obtain
\begin{equation*}
	-\theta t^{-\theta - 1} = -\alpha p t^{-\theta (p-1)},
\end{equation*}
which holds true if and only if the following equations are satisfied:
\begin{equation*}
	\begin{cases}
		\theta &= \alpha p,\\
		-\theta - 1 &= -\theta p + \theta,
	\end{cases}
\end{equation*}
from which we obtain
\begin{equation*}
	\begin{cases}
		\theta &= \frac{1}{p-2},\\
		\alpha &= \frac{1}{p(p-2)}.
	\end{cases}
\end{equation*}
Since $x$ decreases monotonically to zero, for all $ t \geq (p-1) / (p-2)$,
\[
	L = p \left( p-1 \right)
	\left(\frac{p -1}{p-2}\right)^{-\theta (p-2)} = p(p-2)
\]
is an appropriate value for a bound on $\| \nabla^2 f(x) \|$. These
values of $\alpha$ and $L$ satisfy $0<\alpha \leq \frac{1}{L}$, making
$\alpha$ a valid step size.
The objective value is $f(x(t)) = t^{-p / (p-2)}$, matching the rate
of \eqref{eq:jw0}.

\section*{Acknowledgment}
The authors thank Yixin Tao for a discussion that helped us to improve
the clarity of this work.
\bibliographystyle{spmpsci}      
\bibliography{gd}

\end{document}